\newcommand{\barF}{\overline{F}}
\newcommand{\barW}{\overline{W}}
\newtheorem{remark}{Remark}
\newtheorem{example}{Example} 
\newtheorem{proposition}{Proposition} 
\newtheorem{corollary}{Corollary}
\newenvironment{proof}{\begin{trivlist}
\item[\hspace{\labelsep}{\bf\noindent Proof. }]}
{$\hfill\Box$\end{trivlist}}
\title{\huge \bf On dynamic mutual information \\ for bivariate lifetimes 
}
\author{
{\sc Jafar Ahmadi}\footnote{Department of Statistics,
 Ordered and Spatial Data Center of Excellence,
 Ferdowsi University of Mashhad, P.O. Box 1159,
 Mashhad 91775, Iran, E-mail: ahmadi-j@um.ac.ir}
 , \ 
{\sc Antonio Di Crescenzo}\footnote{Dipartimento di Matematica,
 Universit\`a degli Studi di Salerno,
 Via Giovanni Paolo II, 132, 84084 Fisciano (SA), Italy,
 E-mail: adicrescenzo@unisa.it} 
 , \ 
{\sc Maria Longobardi}\footnote{Dipartimento di Matematica e Applicazioni,
 Universit\`a di Napoli Federico II,
 Via Cintia, 80126 Napoli, Italy,
 E-mail: maria.longobardi@unina.it}
} 
\date{\normalsize 
\bf First published in {\em Advances in Applied Probability}, Vol.\ 47, No.\ 4, p.\ 1157--1174\\
\copyright\ 2015 by the Applied Probability Trust
}
\begin{document}

\maketitle

\begin{abstract}
We consider dynamic versions of the mutual information of lifetime distributions, with a focus on past
lifetimes, residual lifetimes and mixed lifetimes evaluated at different instants. This allows us to study
multicomponent systems, by measuring the dependence in conditional lifetimes of two components
having possibly different ages. We provide some bounds, and investigate the mutual information of
residual lifetimes within the time-transformed exponential model (under both the assumptions of
unbounded and truncated lifetimes). Moreover, with reference to the order statistics of a random
sample, we evaluate explicitly the mutual information between the minimum and the maximum,
conditional on inspection at different times, and show that it is distribution-free in a special case.
Finally, we develop a copula-based approach aiming to express the dynamic mutual information
for past and residual bivariate lifetimes in an alternative way. 

\medskip\noindent
{\em Keywords:}  
Entropy; mutual information; bivariate lifetimes; time-transformed exponential model;
order statistics; copula.

\medskip\noindent
{\em Maths Subject Classification:}  Primary {94A17}; Secondary {62N05; 60E99}
\end{abstract}

\section{Introduction and background}
Information measures are largely used in applied contexts in order to describe useful notions related to
stochastic models. The problem of measuring the information content in a dynamic setting arises in 
various fields, such as survival analysis, reliability, and mathematical finance, for example. 
Significant results in this area have been provided in Ebrahimi {\em et al.}\ \cite{EKS2007}, 
where the focus was directed on the joint, marginal, and conditional entropies, and the mutual 
information for residual life distributions in multivariate settings.
In this paper we provide some further insight on the dynamic mutual information, with reference to
past lifetimes, residual lifetimes, and mixed lifetimes evaluated at different ages.
\par
In probability theory the mutual information of two random variables is a measure of their 
mutual dependence, and can be evaluated by means of the joint and marginal distributions. 
See Ebrahimi {\em et al.}\ \cite{EHSV2010} for a  contribution dealing with the mutual
information of certain classes of bivariate distributions, and Arellano-Valle {\em et al.}\ \cite{AVCRG2012}
for a recent investigation on the mutual information of multivariate skew-elliptical distributions.
Other kinds of multivariate information measures have been investigated by
Ebrahimi {\em et al}.\ \cite{ESZ2008}. We also mention that a nonparametric and binless
estimator for the mutual information of a $d$-dimensional random vector has been proposed
recently by Giraudo {\em et al.}\ \cite{GSS2013}.
\par
In view of suitable applications in contexts of reliability theory, in this paper we consider both 
the dynamic extensions of the mutual information and the related entropies. 
Specifically, we aim to study the applications of mutual information to the cases of past, 
residual and mixed distributions. In Section 2 we briefly recall the relevant mathematical 
concepts related to mutual information and entropy, and then introduce the bivariate 
distributions describing two lifetimes conditional on possibly different inspection times. 
In Section 3 we introduce the dynamic mutual information of past lifetimes. We obtain a 
bound for such a measure, which is suitable to describe stochastic models whose uncertainty 
is related to the past. Section 4 is concerning the mutual information of residual lifetimes. 
We provide a bound and a connection between past  and residual mutual information. 
We also investigate such a measure within the time-transformed exponential model 
(both in the classical case of unbounded lifetimes and in the new setting involving truncated 
lifetimes). In Section 5 we study the dynamic mutual information for mixed lifetimes and apply 
it to ordered data. With reference to the order statistics $X_{i:n}$, $i=1,2,\ldots, n$, we evaluate 
explicitly the mutual information between the minimum and the maximum $(X_{1:n}, X_{n:n})$ 
conditional on $(X_{1:n}\leq s, X_{n:n}>t)$ for $s<t$ and show that it is distribution-free in 
a special case. This also allows us to describe the information content in $n$-component 
systems inspected at two different times. Finally, in Section 6 we discuss a copula-based 
approach, which allows us to express the dynamic mutual information for past and residual 
bivariate lifetimes in terms of copula and survival copula, respectively.
\par
Throughout the paper we denote by $[Z|B]$ a random variable or a random vector whose
distribution is identical to the conditional distribution of $Z$ given $B$.  Moreover, primes 
denote derivatives. 
\section{Preliminaries}
Let $(X,Y)$ be a random vector, where $X$ and $Y$ are nonnegative absolutely continuous
random variables. We denote by $f(x,y)$ the joint probability density function (PDF) 
of $(X,Y)$, and by $f_X(x)$ and $f_Y(y)$ the marginal densities of $X$ and $Y$, respectively. 
It is well known that the mutual information of $X$ and $Y$ is defined as
\begin{equation}
 M_{X,Y}=\int_0^{+\infty}{\rm d}x\int_0^{+\infty}f(x,y)\,
 \log \frac{f(x,y)}{f_X(x)\,f_Y(y)}\,{\rm d}y,
 \label{equation:3}
\end{equation}
where `$\log$' means natural logarithm. The term $M_{X,Y}$ is a measure of dependence between $X$ and $Y$.
Indeed, (\ref{equation:3}) defines a premetric, since $M_{X,Y}\geq 0$, with $M_{X,Y}=0$ if and
only if $X$ and $Y$ are independent. Roughly speaking, it measures how far $X$ and $Y$ are  from
being independent, in the sense that high values of $M_{X,Y}$ correspond to a strong dependence
between $X$ and $Y$. Moreover, $M_{X,Y}$ is in general finite and is invariant under linear
transformations. We recall that the mutual information can be expressed in terms of entropies as 
follows (see, for example, Ebrahimi {\em et al}.\ \cite{ESS2010}):
\begin{equation}
 M_{X,Y}=H_X+H_Y-H_{X,Y},
 \label{equation:4}
\end{equation}
where $H_X$ is the differential entropy of $X$, defined by
$H_X=-\int_0^{+\infty}f_X(x)\,\log {f_X(x)}\,{\rm d}x$, $H_Y$ is similarly defined, and
\begin{equation}
 H_{X,Y}=-\int_0^{+\infty}{\rm d}x\int_0^{+\infty}f(x,y)\,
 \log {f(x,y)}\,{\rm d}y
 \label{eq:HXY}
\end{equation}
is the differential entropy of $(X,Y)$. We recall that $H_X$ measures the `uniformity' of the distribution 
of $X$, i.e.\ how the distribution spreads over its domain, and is irrespective of the locations of 
concentration. High values of $H_X$ correspond to a low concentration of the probability mass of $X$.
\par
The reliability analysis of a system composed of two items involves the general setting by which they 
are inspected at possibly different times $s$ and $t$. Assuming that the random variables $X$ and $Y$ 
describe the failure times of the two items, the following conditional random vectors thus deserve interest, 
for $s,t\geq 0$,
\begin{eqnarray}
 && \hspace{-1cm} [(X,Y)\,|\,X\leq s,Y\leq t]   \quad \hbox{if both items failed before inspection},  
 \label{eq:domains1} \\
 && \hspace{-1cm} [(X,Y)\,|\,X>s,Y>t]  \quad \hbox{if no item failed before inspection},
  \label{eq:domains2} \\
 && \hspace{-1cm} [(X,Y)\,|\,X\leq s,Y>t]   \quad \hbox{if only the first item failed before inspection}, 
  \label{eq:domains3} \\
 && \hspace{-1cm} [(X,Y)\,|\,X>s,Y\leq t]  \quad  \hbox{if only the second item failed before inspection}.
 \label{eq:domains4} 
\end{eqnarray}
The probability of the conditional events considered above will be denoted as 
$$
\begin{array}{ll}
 F(s,t)=\mathbb P(X\leq s,Y\leq t), & \barF(s,t)=\mathbb P(X>s,Y>t), \\
F^{-,+}(s,t)=\mathbb P(X\leq s,Y>t), &  F^{+,-}(s,t)=\mathbb P(X>s,Y\leq t),  \\
\end{array}
$$
so that $F(s,t)+\barF(s,t)+F^{-,+}(s,t)+F^{+,-}(s,t)=1$. In order to introduce certain dynamic 
entropies, we now consider the following functions. \\
(i) The density of $[(X,Y)\,|\,X\leq s,Y\leq t]$ for all $s,t\geq 0$ such that $F(s,t)>0$, 
\begin{equation}
 \tilde f_{X,Y}(x,y;s,t)=\frac{f(x,y)}{F(s,t)},
 \qquad 0\leq x\leq s,
 \quad 0\leq y\leq t.
  \label{equation:27}
\end{equation}
(ii) The density of $[(X-s, Y-t)\,|\,X>s,Y>t]$ for all $s,t\geq 0$ such that $\barF(s,t)>0$, 
\begin{equation}
 f_{X,Y}(x,y;s,t)=\frac{f(x+s,y+t)}{\barF(s,t)},
 \qquad x\geq 0,
 \quad y\geq 0.
 \label{equation:9}
\end{equation}
(iii) The density of $[(X,Y-t)\,|\,X\leq s,Y> t]$,  for all $s,t\geq 0$ such that $F^{-,+}(s,t)>0$, 
$$
 f_{X,Y}^{-,+}(x,y;s,t)=\frac{f(x,y+t)}{F^{-,+}(s,t)}, \qquad 0\leq x\leq s, \quad y\geq 0.
$$
(iv) The density of $[(X-s,Y)\,|\,X> s,Y\leq t]$  for all $s,t\geq 0$ such that  $F^{+,-}(s,t)>0$, 
$$
 f_{X,Y}^{+,-}(x,y;s,t)=\frac{f(x+s,y)}{F^{+,-}(s,t)}, \qquad x\geq 0, \quad 0\leq y\leq t.
$$
Hence, in analogy with (\ref{eq:HXY}) we can now introduce the following entropies, for $s,t\geq 0$: 
\begin{equation}
 \tilde H_{X,Y}(s,t)=-\int_0^s \,{\rm d}x\int_0^t\tilde f_{X,Y}(x,y;s,t)\log \tilde f_{X,Y}(x,y;s,t)\,{\rm d}y,
 \label{eq:tildeHXY}
\end{equation}
\begin{equation}
H_{X,Y}(s,t)=-\int_0^{+\infty}{\rm d}x\int_0^{+\infty}f_{X,Y}(x,y;s,t)\,\log f_{X,Y}(x,y;s,t)\,{\rm d}y,
 \label{equation:15}
\end{equation}
$$
 H_{X,Y}^{-,+}(s,t)=-\int_0^s{\rm d}x\int_0^{+\infty}f_{X,Y}^{-,+}(x,y;s,t)\log f_{X,Y}^{-,+}(x,y;s,t)\,{\rm d}y,
$$
$$
 H_{X,Y}^{+,-}(s,t)=-\int_0^{+\infty}{\rm d}x\int_0^t f_{X,Y}^{+,-}(x,y;s,t)\log f_{X,Y}^{+,-}(x,y;s,t)\,{\rm d}y.
$$
\begin{remark}\label{rem:entr4}\rm
The entropy (\ref{eq:HXY}) can be expressed in terms of the entropies given above;
indeed, for all $s,t\geq 0$,
\begin{equation}
 \begin{split}
 H_{X,Y} & ={\cal H}[F(s,t),\barF(s,t),F^{-,+}(s,t),F^{+,-}(s,t)] \\
 & +F(s,t) \tilde H_{X,Y}(s,t)+\barF(s,t) H_{X,Y}(s,t) \\
 & +F^{-,+}(s,t)H_{X,Y}^{-,+}(s,t)+F^{+,-}(s,t)H_{X,Y}^{+,-}(s,t),
\end{split}
\label{eq:HHXY}
\end{equation}
where ${\cal H}[p_1,\ldots,p_n]:=-\sum_{i=1}^n p_i\log p_i$ denotes  the entropy
of a discrete probability distribution.
\end{remark}
\par
We recall that (\ref{eq:HHXY}) is the two-dimensional analogue of [8, Proposition 2.1].
It holds due to the partitioning property of the Shannon entropy (see, for example, Eq.\ (24)
of Ebrahimi {\em et al.}\ \cite{EKS2007} for another application of such a property). It expresses
that the uncertainty about the failure times of two items can be decomposed in five terms.
The first term conveys the uncertainty of whether the items failed before or after their inspection times, 
the other terms give the uncertainties about the failure times in the domains specified in
(\ref{eq:domains1})-(\ref{eq:domains4}), given that 
the items failed in the corresponding regions. Note that (\ref{eq:HHXY}) is in agreement 
with some remarks provided in [14, Section 4.4]. 
\par
We are now able to study the dynamic mutual information for the cases introduced in this section. 
\section{Mutual information for past lifetimes}
In various contexts the uncertainty is not necessarily related to the future but may refer to 
the past. For instance, if a system is observed at an inspection time $t$ and  is found 
failed, then the uncertainty relies on the past, i.e.\ on which instant in $(0, t)$ it failed. 
Several papers have been devoted to the investigation of information measures concerning 
past lifetimes. We recall, for instance, the univariate past entropy defined in  \cite{DiCrLo2002}. 
Some properties and generalizations have also been investigated   in 
\cite{NaPa2006}, \cite{NaPa2006b},  \cite{KuNaMa2010}, and  \cite{MiYa2011}.
\par
In this section we introduce the mutual information for the bivariate past lifetimes defined in 
(\ref{eq:domains1}). To this aim we consider the marginal past lifetimes
\begin{equation}
 [X\,|\,X\leq s,Y\leq t], \qquad [Y\,|\,X\leq s,Y\leq t],
 \qquad  s, t\geq 0,
 \label{eq:defxeypcond}
\end{equation}
having PDFs
\begin{equation}
 \tilde f_X(x;s,t):=\frac{1}{F(s,t)}\,\frac{\partial}{\partial x}F(x,t)
 =\frac{1}{F(s,t)}\,\int_0^t f(x,y)\,{\rm d}y,
 \qquad 0\leq x\leq s,
 \label{equation:23}
\end{equation}
\begin{equation}
 \tilde f_Y(y;s,t) :=\frac{1}{F(s,t)}\,\frac{\partial}{\partial y}F(s,y)
 =\frac{1}{F(s,t)}\,\int_0^s f(x,y)\,{\rm d}x,
 \qquad 0\leq y\leq t,
 \label{equation:24}
\end{equation}
for $s,t\geq 0$ such that $F(s,t)>0$. In analogy with (\ref{equation:3}), 
we are now able to define the following new information measure, named 
{\em bivariate dynamic past mutual information\/}:
\begin{equation}
\tilde M_{X,Y}(s,t):=\int_0^s{\rm d}x\int_0^t \tilde f_{X,Y}(x,y;s,t)\,
 \log\frac{\tilde f_{X,Y}(x,y;s,t)}{\tilde f_X(x;s,t)\,\tilde f_Y(y;s,t)}\,{\rm d}y
 \label{equation:29}
\end{equation}
for $s,t\geq 0$ such that $F(s,t)>0$, where the involved densities are given in 
(\ref{equation:27}), (\ref{equation:23}), and (\ref{equation:24}). This is a nonnegative function 
which measures the dependence between the past lifetimes of $X$ and $Y$ conditional
on $\{X\leq s,Y\leq t\}$.
\begin{remark}\rm
Similarly to (\ref{equation:4}), for $s,t\geq 0$ the following identity holds:
$$
\tilde M_{X,Y}(s,t)=\tilde H_X(s,t)+\tilde H_Y(s,t)-\tilde H_{X,Y}(s,t),
$$
where
$$
 \tilde H_X(s,t)=-\int_0^s \tilde f_X(x;s,t)\log \tilde f_X(x;s,t)\,{\rm d}x
$$
and 
$$
 \tilde H_Y(s,t)=-\int_0^t \tilde f_Y(y;s,t)\log \tilde f_Y(y;s,t)\,{\rm d}y
$$
are the entropies of the marginal past lifetimes introduced in (\ref{eq:defxeypcond}), 
and where $\tilde H_{X,Y}(s,t)$ is defined in (\ref{eq:tildeHXY}). 
\end{remark}
\par
Let us now obtain some bounds.
\begin{proposition}\label{prop:boundpM}
For $s,t\geq 0$ such that $F(s,t)>0$, let
\begin{equation}
 \tilde a(x,y;s,t)
 := \frac{f(x,y)}{\int_0^t f(x,y)\,{\rm d}y\int_0^s f(x,y)\,{\rm d}x}
 \qquad 0\leq x\leq s,
 \quad 0 \leq y\leq t.
 \label{eq:defatilde}
\end{equation}
If
$$
 \tilde a(x,y;s,t)\leq (\geq) \tilde a(s,t;s,t)
 \qquad
 \hbox{for all $\,0\leq x\leq s$ and $\,0 \leq y\leq t$,}
$$
then the following upper [lower] bound holds:
\begin{equation}
\tilde M_{X,Y}(s,t)\leq (\geq)  \log \tilde a(s,t;s,t)+\log F(s,t).
 \label{equation:33}
\end{equation}
\end{proposition}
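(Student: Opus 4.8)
The plan is to exploit the structure of the integrand in (\ref{equation:29}), observing that the logarithmic ratio factors cleanly through the auxiliary function $\tilde a$ defined in (\ref{eq:defatilde}). First I would substitute the explicit densities (\ref{equation:27}), (\ref{equation:23}) and (\ref{equation:24}) into the argument of the logarithm. The three separate factors of $F(s,t)$ sitting in the denominators combine so as to leave
$$
\frac{\tilde f_{X,Y}(x,y;s,t)}{\tilde f_X(x;s,t)\,\tilde f_Y(y;s,t)}
= F(s,t)\,\tilde a(x,y;s,t),
$$
which is the identity linking the integrand directly to the hypothesis.

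Next I would take the logarithm of both sides, so that the integrand of (\ref{equation:29}) reads $\tilde f_{X,Y}(x,y;s,t)\,\bigl[\log F(s,t)+\log\tilde a(x,y;s,t)\bigr]$. Here the assumption enters: if $\tilde a(x,y;s,t)\le\tilde a(s,t;s,t)$ throughout the rectangle $[0,s]\times[0,t]$, then monotonicity of the logarithm yields $\log\tilde a(x,y;s,t)\le\log\tilde a(s,t;s,t)$ pointwise, and since $\tilde f_{X,Y}(x,y;s,t)\ge 0$ the entire integrand is dominated by $\tilde f_{X,Y}(x,y;s,t)\,\bigl[\log F(s,t)+\log\tilde a(s,t;s,t)\bigr]$.

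Finally I would integrate this bound over $[0,s]\times[0,t]$. The bracketed quantity is a constant and factors out of the integral, while the integral of $\tilde f_{X,Y}(x,y;s,t)$ over the rectangle equals $1$, since (\ref{equation:27}) is a genuine probability density there; this delivers the upper bound (\ref{equation:33}). The lower bound follows verbatim with every inequality reversed, starting from the hypothesis $\tilde a(x,y;s,t)\ge\tilde a(s,t;s,t)$.

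I do not expect a genuine obstacle in this argument. The one step warranting care is the algebraic verification of the factorization identity, where one must correctly cancel the three occurrences of $F(s,t)$ and recover the single surviving factor $F(s,t)\,\tilde a(x,y;s,t)$; this is precisely the calculation that motivated the particular definition of $\tilde a$ in (\ref{eq:defatilde}). Once that identity is in hand, the proposition is immediate from pointwise domination and the normalization of the conditional density $\tilde f_{X,Y}$.
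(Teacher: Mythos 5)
Your proposal is correct and follows essentially the same route as the paper: the factorization $\tilde f_{X,Y}/(\tilde f_X\,\tilde f_Y)=F(s,t)\,\tilde a(x,y;s,t)$ yields exactly the paper's intermediate identity (\ref{eq:tildeMst2}), after which the pointwise bound on $\log\tilde a$ and the normalization of $\tilde f_{X,Y}$ give (\ref{equation:33}). No gaps.
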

\begin{proof}
From (\ref{equation:29}), making use of (\ref{equation:27}), (\ref{equation:23}) and (\ref{equation:24}), 
we have
\begin{equation}
 \tilde M_{X,Y}(s,t)=\frac{1}{F(s,t)}\int_0^s{\rm d}x\int_0^t f(x,y) \log \tilde a(x,y;s,t)\,{\rm d}y+\log F(s,t).
 \label{eq:tildeMst2}
\end{equation}
Hence, from (\ref{eq:defatilde}) we immediately obtain   (\ref{equation:33}).
\end{proof}
\begin{example}
{\rm
Let $(X,Y)$ be a random vector with joint PDF and distribution function
$$
 f(x,y)=x+y,
 \qquad
 F(x,y)=\frac{x y(x+y)}{2},
 \qquad 0\leq x\leq 1, \; 0\leq y\leq 1.
$$
Since, for $0<s<1$ and $0<t<1$,
$$
 \tilde a(x,y;s,t)= \frac{4(x+y)}{s t (t+2 x)(s+2 y)},
 \qquad 0\leq x\leq 1,\;0\leq y\leq 1,
$$
from (\ref{eq:tildeMst2}) we have
\begin{equation}
\begin{split}
\tilde M_{X,Y}(s,t)
& =\log \frac{st(s+t)}{2}
+\frac{1}{st(s+t)}\Big\{ st(s+t) \log\frac{4} {s t}\\
& +\frac{1}{6}[-2s^3\log s-2t^3\log t+2(s+t)^3\log(s+t)-5st(s+t)] \\
& +\frac{t}{4}[2s(s+t)+t^2\log t-(t+2s)^2\log (t+2s)] \\
& +\frac{s}{4}[2t(s+t)+s^2\log s-(s+2t)^2\log (s+2t)]\Big\}.
\end{split}
\label{eq:MtildeEsxpy}
\end{equation}
For any fixed $t\in (0,1)$, it follows that $\tilde M_{X,Y}(s,t)$ is increasing for $s\in(0,t]$
and, thus, attains the maximum for $s=t$, with
$$
 \tilde M_{X,Y}(t,t)=\frac{2+40\log 2-27\log 3}{12}=0.0053, \qquad t\in (0,1).
$$
The plot of $\tilde M_{X,Y}(s,t)$ is given in Figure \ref{fig:1}. See [14, Example 1]
for other results on the information content of the bivariate 
distribution considered in this example.
}\end{example}
%
\begin{figure}[t]
\begin{center}
\epsfxsize=8.5cm
\epsfbox{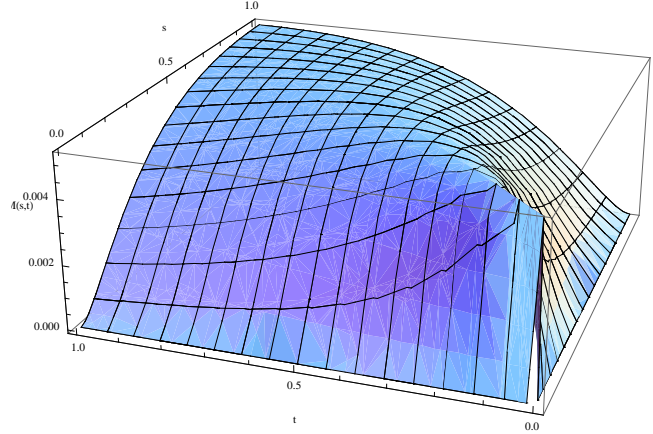}
\caption{\small{Plot of the past mutual information given in (\ref{eq:MtildeEsxpy}).}}
\label{fig:1}
\end{center}
\end{figure}
%
\par
Let us now recall that the reversed hazard rate of a random lifetime $X$ is given by
$\tau_X(x)=-({\rm d}/{\rm d}x)\log F_X(x)=f_X(x)/F_X(x)$ for all $x$ such that $0<F_X(x)<1$, where  
$F_X(x)=\mathbb P(X\leq x)$. 
\begin{remark}\label{remark:rrates}
\rm
The argument of the logarithm in (\ref{equation:29}) can be viewed as a local dynamic measure
of dependence between $X$ and $Y$. Indeed, due to (\ref{equation:27}), (\ref{equation:23}), 
and (\ref{equation:24}), we have:
$$
 \frac{\tilde f_{X,Y}(x,y;s,t)}{\tilde f_X(x;s,t)\,\tilde f_Y(y;s,t)}
 =\frac{\tau_{\tilde X_s}(x\,|\,Y=y)}{\tau_{\tilde X_s}(x\,|\,Y\leq t)},
$$
where $\tau_{\tilde X_s}(x\,|\,B)$ is the conditional reversed hazard rate of $\tilde X_s:=[X\,|\,X\leq s]$ given $B$.
\end{remark}
\section{Mutual information for residual lifetimes}
The uncertainty about the remaining lifetime in reliability systems is often measured by means of the 
differential entropy of residual lifetimes; see \cite{BNRdA2004}, \cite{Ebr1996}, and \cite{EbrPel1995}. 
Recent contributions on the entropy 
of residual lifetimes are given in \cite{AAMMB2010}. Other dynamic 
information measures involving conditional lifetimes have been proposed and studied
in \cite{AsZo2007}, \cite{DiCrLo2009}, and \cite{NaDAA2010}.
For a random vector $(X,Y)$ with nonnegative absolutely continuous components, 
Di Crescenzo {\em et al.}\ \cite{DiCrLoNa2004} studied the mutual information 
of the residual lifetimes $[X-t\,|\,X>t]$ and $[Y-t\,|\,Y>t]$ at the same age. 
\par
In this section, with reference to (\ref{eq:domains2}), we investigate the mutual information 
of the residual lifetimes at different ages, i.e. 
\begin{equation}
 [X-s\,|\,X>s,Y>t], 
 \qquad
 [Y-t\,|\,X>s,Y>t], 
 \qquad  s, t\geq 0.
 \label{equation:6}
\end{equation}
For all $s,t\geq 0$ such that $\barF(s,t)>0$, 
the random variables (\ref{equation:6}) possess densities
\begin{equation}
 f_X(x;s,t)=\frac{1}{\barF(s,t)} \left[-\frac{\partial}{\partial u}\barF(u,t)\right]_{u=x+s}
 =\frac{1}{\barF(s,t)}\,\int_t^{+\infty}f(x+s,y)\,{\rm d}y,
 \qquad x\geq0
 \label{equation:7}
\end{equation}
and
\begin{equation}
 f_Y(y;s,t)=\frac{1}{\barF(s,t)} \left[-\frac{\partial}{\partial v}\barF(s,v)\right]_{v=y+t}
 =\frac{1}{\barF(s,t)}\,\int_s^{+\infty}f(x,y+t)\,{\rm d}x,
 \qquad y\geq0.
 \label{equation:8}
\end{equation}
According to  (\ref{equation:3}) we thus introduce the 
{\em bivariate dynamic residual mutual information\/}, for $s,t\geq 0$ such that $\barF(s,t)>0$, 
\begin{equation}
 M_{X,Y}(s,t):=\int_0^{+\infty}{\rm d}x\int_0^{+\infty}f_{X,Y}(x,y;s,t)\,
 \log\frac{f_{X,Y}(x,y;s,t)}{f_X(x;s,t)\,f_Y(y;s,t)}\,{\rm d}y,
 \label{equation:10}
\end{equation}
the involved densities being defined in  (\ref{equation:9}), (\ref{equation:7}) and (\ref{equation:8}).  
Since $X$ and $Y$ describe the random lifetimes of two systems, $M_{X,Y}(s,t)$ measures the 
dependence between their remaining lifetimes at different ages $s$ and $t$. See the analogy between 
(\ref{equation:10}) and the mutual information of $[(X, Y)\,|\,X>s,Y>t]$ given in 
[14, Equation (1)]. We remark that other types of dynamic information 
measures for bivariate distributions have been studied by Sunoj and Linu \cite{SuLi2012}.
\par
Moreover, in agreement with (\ref{equation:4}), the mutual information $M_{X,Y}(s,t)$ satisfies
the following identity (see [14, Equation (13)]):
\begin{equation}
 M_{X,Y}(s,t)=H_X(s,t)+H_Y(s,t)-H_{X,Y}(s,t),
 \qquad s,t\geq 0,
 \label{equation:12}
\end{equation}
where $H_{X,Y}(s,t)$ is defined in (\ref{equation:15}) and 
\begin{equation}
\begin{split}
 & H_X(s,t)=-\int_0^{+\infty}f_X(x;s,t)\,\log f_X(x;s,t)\,{\rm d}x, \\
 & H_Y(s,t)=-\int_0^{+\infty}f_Y(y;s,t)\,\log f_Y(y;s,t)\,{\rm d}y
\end{split} 
 \label{equation:13}
\end{equation}
denote the entropies of the residual lifetimes (\ref{equation:6}), for $s,t\geq 0$. 
Various other results have been pinpointed in  \cite{EKS2007},
such as the following property: if $X$ and $Y$ are exchangeable then
$M_{X,Y}(s,t)=M_{X,Y}(t,s)$ for all $s,t\geq 0$.
\par
We recall that the hazard rate of a random lifetime $X$  is given by
$h_X(x)=-({\rm d}/{\rm d}x)\log [1-F_X(x)]=f_X(x)/[1-F_X(x)]$ for all $x$ such that $0<F_X(x)<1$.
\begin{remark}\label{remark:rates}
\rm
Similarly as in Remark \ref{remark:rrates},
the argument of the logarithm in  (\ref{equation:10}) can be viewed as a local dynamic measure
of dependence between $X$ and $Y$. Indeed, from (\ref{equation:9}), (\ref{equation:7}), and  
(\ref{equation:8}), we have
\begin{equation}
 \frac{f_{X,Y}(x,y;s,t)}{f_X(x;s,t)\,f_Y(y;s,t)}
 =\frac{h_{X_s}(x\,|\,Y=y+t)}{h_{X_s}(x\,|\,Y>t)},
 \label{eq:misurahXs}
\end{equation}
where $h_{X_s}(x\,|\,B)$ is the conditional hazard rate of $X_s:=[X-s\,|\,X>s]$ given $B$.
Moreover, the right-hand-side of  (\ref{eq:misurahXs}) is a suitable extension of association 
measures that are often employed in reliability theory (see, for example \cite{Gu2008} 
and references therein).
\end{remark}
\par
The following result is analogous to Proposition \ref{prop:boundpM}.
\begin{proposition}\label{prop:boundM}
For $s,t\geq 0$ such that $\barF(s,t)>0$, let 
\begin{equation}
 a(x,y;s,t):=\frac{f(x,y)}{\int_t^{+\infty}f(x,v)\,{\rm d}v \,\int_s^{+\infty}f(u,y)\,{\rm d}u}
 \qquad x\geq s,\;\; y\geq t.
  \label{equation:defa}
\end{equation}
If
\begin{equation}
  a(x,y;s,t)\leq [\geq]\;  a(s,t;s,t) \qquad
 \hbox{for all $\,x\geq s$ and $\,y\geq t$,}
  \label{equation:16}
\end{equation}
then the following upper [lower] bound holds:
\begin{equation}
 M_{X,Y}(s,t)\leq [\geq]\; \log a(s,t;s,t)+\log \barF(s,t).
 \label{eq:boundM}
\end{equation}
\end{proposition}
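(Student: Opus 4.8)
The plan is to mirror the proof of Proposition~\ref{prop:boundpM} in the residual setting. First I would substitute the three residual densities (\ref{equation:9}), (\ref{equation:7}) and (\ref{equation:8}) into the definition (\ref{equation:10}) and simplify the argument of the logarithm. The crucial observation is that the three normalizing factors $1/\barF(s,t)$ collapse, leaving
\begin{equation}
 \frac{f_{X,Y}(x,y;s,t)}{f_X(x;s,t)\,f_Y(y;s,t)}
 =\barF(s,t)\,a(x+s,y+t;s,t),
 \label{eq:plan-ratio}
\end{equation}
with $a$ as in (\ref{equation:defa}): the numerator $f(x+s,y+t)$ together with the shifted marginal integrals $\int_t^{+\infty}f(x+s,v)\,{\rm d}v$ and $\int_s^{+\infty}f(u,y+t)\,{\rm d}u$ reproduce exactly the defining expression of $a$ evaluated at the shifted point $(x+s,y+t)$. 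Hence the logarithm of the ratio splits additively into $\log a(x+s,y+t;s,t)+\log\barF(s,t)$.

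Next I would perform the change of variables $u=x+s$, $v=y+t$, which carries the first quadrant $\{x\geq 0,\,y\geq 0\}$ onto the region $\{u\geq s,\,v\geq t\}$ on which $a$ is defined. Because $\int_0^{+\infty}{\rm d}x\int_0^{+\infty} f_{X,Y}(x,y;s,t)\,{\rm d}y=1$, the $\log\barF(s,t)$ summand integrates to itself, and the remaining term reduces to the residual analogue of (\ref{eq:tildeMst2}),
\begin{equation}
 M_{X,Y}(s,t)
 =\frac{1}{\barF(s,t)}\int_s^{+\infty}{\rm d}x\int_t^{+\infty}f(x,y)\,\log a(x,y;s,t)\,{\rm d}y+\log\barF(s,t).
 \label{eq:plan-Mst}
\end{equation}

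Finally, under hypothesis (\ref{equation:16}) monotonicity of the logarithm gives $\log a(x,y;s,t)\leq\log a(s,t;s,t)$ for all $x\geq s$ and $y\geq t$; replacing the integrand in (\ref{eq:plan-Mst}) by this constant and using $\int_s^{+\infty}{\rm d}x\int_t^{+\infty} f(x,y)\,{\rm d}y=\barF(s,t)$ delivers the upper bound (\ref{eq:boundM}), the lower bound following by reversing the inequality throughout. I expect the only delicate point to be the bookkeeping in the change of variables: one must check that the residual domain $\{x\geq 0,\,y\geq 0\}$ maps onto precisely the half-quadrant $\{x\geq s,\,y\geq t\}$ where $a$ is defined and that the three factors $1/\barF(s,t)$ telescope to the single prefactor shown in (\ref{eq:plan-ratio}). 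This is routine but the most error-prone step of the argument.
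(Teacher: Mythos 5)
Your proposal is correct and follows essentially the same route as the paper: you derive the alternative representation $M_{X,Y}(s,t)=\frac{1}{\barF(s,t)}\int_s^{+\infty}{\rm d}x\int_t^{+\infty}f(x,y)\log a(x,y;s,t)\,{\rm d}y+\log\barF(s,t)$, which is exactly the paper's equation (\ref{equation:11}), and then apply the hypothesis (\ref{equation:16}) under the integral. The intermediate identity $\frac{f_{X,Y}(x,y;s,t)}{f_X(x;s,t)f_Y(y;s,t)}=\barF(s,t)\,a(x+s,y+t;s,t)$ and the change of variables are both verified correctly.
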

\begin{proof}
Due to (\ref{equation:9}), (\ref{equation:7}), and (\ref{equation:8}),
from (\ref{equation:10}) we obtain the following alternative expression for $M_{X,Y}(s,t)$, $s,t\geq 0$:
\begin{equation}
 M_{X,Y}(s,t)=\frac{1}{\barF(s,t)}\int_s^{+\infty}{\rm d}x \int_t^{+\infty}f(x,y)\,
 \log a(x,y;s,t)\,{\rm d}y +\log \barF(s,t).
 \label{equation:11}
\end{equation}
The proof then immediately follows by use of (\ref{equation:16}) in the
right-hand side of (\ref{equation:11}).
\end{proof}
\begin{example}\rm
Let $(X,Y)$ be a random vector with joint PDF
$$
 f(x,y)=\frac{\theta}{\Gamma(0, {1}/{\theta })}\,\exp\left\{-\frac{1}{\theta }(1+\theta x)(1+\theta y)\right\},
 \qquad x,y\geq 0,
$$
with $\theta>0$, and where $\Gamma(a,z)=\int_z^{+\infty}t^{a-1}\,e^{-t}\,{\rm d}t$ 
is the incomplete Gamma function. Since
$$
 \barF(x,y)=\frac{\Gamma\left(0, ({1}/{\theta })(1+\theta x)(1+\theta y)\right)}{\Gamma(0, {1}/{\theta})},
 \qquad x,y\geq 0,
$$
from (\ref{equation:9}), we have, for $s,t\geq 0$,
$$
 f_{X,Y}(x,y;s,t) =\frac{\theta\, \exp\{- ({1}/{\theta})[1+\theta (x+s)]\,[1+\theta(y+t)]\}}
 {\Gamma\left(0, ({1}/{\theta })(1+\theta s)(1+\theta t)\right)},
 \qquad x,y\geq 0.
$$
Hence, recalling (\ref{equation:defa}), after some calculations we obtain, for $x,y\geq 0$,
$$
 a(x,y;s,t) =\frac{1} {\theta}\,\Gamma\Big(0,\frac{1}{\theta}\Big)\,
  (1+\theta\,x)(1+\theta\,y) \exp\left\{\frac{1}{\theta}\left[1+\theta(s+t)+\theta^2(t x+s y -xy)\right]\right\}.
$$
This expression allows to  evaluate $M_{X,Y}(s,t)$ numerically, by use of (\ref{equation:11}).
Other properties of dynamic measures concerning this case
are given in [14, Section 4.4].
\end{example}
\par
In the following proposition we show a relation between the bivariate dynamic residual and
past mutual information of symmetric random vectors.
\begin{proposition}\label{prop:symmMM}
If the random vector $(U,V)$ has bivariate density $f_{U,V}(x,y)$ such that, for a fixed
$(x_0,y_0)\in \mathbb{R}_+^2$,
\begin{equation}
 f_{U,V}(x,y)=f(2x_0-x, 2y_0-y)
 \qquad \hbox{for all $(x,y)\in \mathbb{R}_+^2$,}
 \label{eq:relsimm}
\end{equation}
then
$\tilde M_{U,V}(s,t)= M_{X,Y}(2x_0-s,2y_0-t)$  for all $s,t\geq 0$.
\end{proposition}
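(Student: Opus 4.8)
The plan is to deduce the identity from a single reflection change of variables that carries the past region of $(U,V)$ onto the residual region of $(X,Y)$. Write $s':=2x_0-s$ and $t':=2y_0-t$, and observe first that (\ref{eq:relsimm}) forces $(X,Y)$ to be supported on $[0,2x_0]\times[0,2y_0]$, since otherwise $f_{U,V}$ would not integrate to one over $\mathbb{R}_+^2$; this truncation is exactly what makes the two domains correspond. The substitution I would use throughout is $u=2x_0-x$, $v=2y_0-y$, whose Jacobian has absolute value one.

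First I would rewrite both measures in the logarithmic-ratio form already available in the paper. Applying (\ref{eq:tildeMst2}) to the vector $(U,V)$ gives
$$\tilde M_{U,V}(s,t)=\frac{1}{F_{U,V}(s,t)}\int_0^s{\rm d}x\int_0^t f_{U,V}(x,y)\,\log\tilde a_{U,V}(x,y;s,t)\,{\rm d}y+\log F_{U,V}(s,t),$$
where $\tilde a_{U,V}$ denotes the factor (\ref{eq:defatilde}) formed from $f_{U,V}$, while (\ref{equation:11}) gives
$$M_{X,Y}(s',t')=\frac{1}{\barF(s',t')}\int_{s'}^{+\infty}{\rm d}u\int_{t'}^{+\infty}f(u,v)\,\log a(u,v;s',t')\,{\rm d}v+\log\barF(s',t'),$$
with $a$ as in (\ref{equation:defa}). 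I would then verify three correspondences under the reflection: (a) $F_{U,V}(s,t)=\barF(s',t')$; (b) $\tilde a_{U,V}(x,y;s,t)=a(2x_0-x,2y_0-y;s',t')$; and (c) $f_{U,V}(x,y)=f(2x_0-x,2y_0-y)$, the last being the hypothesis itself. Item (a) follows by substituting $u=2x_0-x$, $v=2y_0-y$ in the integral defining $F_{U,V}(s,t)$, which produces the limits $[s',2x_0]\times[t',2y_0]$ that may be replaced by $[s',+\infty)\times[t',+\infty)$ because $f$ vanishes outside $[0,2x_0]\times[0,2y_0]$. Item (b) follows in the same way applied to the two one-dimensional integrals in the denominator of $\tilde a_{U,V}$.

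Granting (a)--(c), the conclusion is immediate: substituting $u=2x_0-x$, $v=2y_0-y$ into the displayed expression for $\tilde M_{U,V}(s,t)$ turns the integrand $f_{U,V}(x,y)\log\tilde a_{U,V}(x,y;s,t)$ into $f(u,v)\log a(u,v;s',t')$, the domain $[0,s]\times[0,t]$ into $[s',+\infty)\times[t',+\infty)$, and the remaining factors into $1/\barF(s',t')$ and $\log\barF(s',t')$. This reproduces verbatim the expression for $M_{X,Y}(s',t')=M_{X,Y}(2x_0-s,2y_0-t)$. The step requiring the most care is the bookkeeping of the limits in (a) and (b): one must use the truncation $X\le 2x_0$, $Y\le 2y_0$ implicit in (\ref{eq:relsimm}) to push the finite limits $2x_0,2y_0$ out to $+\infty$, so that the reflected past domain genuinely coincides with the residual domain. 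Equivalently, one checks that $[X-s'\mid X>s']$ has support $[0,2x_0-s']=[0,s]$, matching the past support of $[U\mid U\le s]$; once this is settled, no further analytic subtlety remains.
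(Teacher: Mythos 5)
Your proof is correct and follows essentially the same route as the paper's: the authors likewise reduce everything to the reflection $u=2x_0-x$, $v=2y_0-y$ and the resulting identity $F_{U,V}(x,y)=\barF(2x_0-x,2y_0-y)$, then invoke the definitions of $\tilde M_{U,V}$ and $M_{X,Y}$. You merely spell out the details they leave implicit, including the useful observation that \eqref{eq:relsimm} forces $f$ to be supported on $[0,2x_0]\times[0,2y_0]$, which justifies extending the reflected integration limits to $+\infty$.
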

\begin{proof}
The proof follows from the definitions of $\tilde M_{U,V}$ and  $M_{X,Y}$,
since the distribution function of $(U,V)$ satisfies
$F_{U,V}(x,y)=\barF(2x_0-x, 2y_0-y)$ for all $(x,y)\in \mathbb{R}_+^2$.
\end{proof}
\begin{example}\label{esempio:alfabeta}
\rm
Let $(X,Y)$ be a random vector uniformly distributed over the domain
${\cal D}:=\{(x,y):x\geq 0, \; y\geq 0,\; \alpha x+\beta y\leq 1\}$ with $\alpha, \beta>0$.
Hence, the joint PDF and the joint survival function are given by 
$$
 f(x,y)=2 \alpha\beta,
 \qquad
 \barF(x,y)=(1-\alpha x-\beta y)^2,
 \qquad (x,y)\in {\cal D},
$$
so that, from (\ref{equation:7}), we have the density
$$
 f_X(x;s,t)=\frac{2\alpha[1-\alpha (s+x)-\beta t]}
 {(1-\alpha s-\beta t)^2},
 \qquad 0\leq x\leq \frac{1}{\alpha}-\frac{\beta}{\alpha}t-s, 
 \qquad (s,t)\in{\cal D}.
$$
Due to (\ref{equation:13}), for $(s,t)\in{\cal D}$ the entropies of the residual lifetimes are  
\begin{equation}
 H_X(s,t)= \frac{1}{2}+\log \frac{1-\alpha s-\beta t}{2\alpha},
 \qquad
 H_Y(s,t)= \frac{1}{2}+\log \frac{1-\alpha s-\beta t}{2\beta}.
 \label{equation:20}
\end{equation}
From (\ref{equation:9}), we obtain, for $(s,t)\in{\cal D}$,
$$
 f_{X,Y}(x,y;s,t)= \frac{2\alpha\beta}{(1-\alpha s-\beta t)^2},
 \qquad (x+s,y+t)\in {\cal D}.
$$
Hence, making use of (\ref{equation:15}) we obtain the entropy of 
$[(X-s, Y-t)\,|\, X > s, Y > t]$:
\begin{equation}
H_{X,Y}(s,t)= 2\log (1-\alpha s-\beta t)-\log (2\alpha\beta),
 \qquad (s,t)\in{\cal D}.
  \label{equation:22}
\end{equation}
In conclusion, recalling (\ref{equation:12}), (\ref{equation:20}) and (\ref{equation:22})
we establish that the dynamic residual mutual information of $(X,Y)$ is constant:
\begin{equation}
M_{X,Y}(s,t)=1-\log 2=0.3069, \qquad (s,t)\in{\cal D}.
\label{eq:MXYesab}
\end{equation}
Note that in this case for $(s,t)\in{\cal D}$, we have
$$
 a(x,y;s,t)=\frac{1}{2(1-\alpha x -\beta t)(1-\alpha s -\beta y)}
 \geq a(s,t;s,t)= \frac{1} {2(1-\alpha s-\beta t)^2};
$$
however, now the bound given in (\ref{eq:boundM}) is not useful since the right-hand side of 
(\ref{eq:boundM}) is negative. Let $(U,V)$ have density
$$
 f_{U,V}(x,y)=2\alpha\beta 
 \qquad \hbox{for $(x,y)\in \tilde D:=
 \left\{x\leq \displaystyle\frac{1}{\alpha}, \; y\leq \frac{1}{\beta}, \;\alpha x+\beta y\geq 1\right\}$,}
$$
and distribution function $F_{U,V}(x,y)=(\alpha x+\beta y-1)^2$ for $(x,y)\in \tilde D$.
Then, $(U,V)$ is symmetric to  $(X,Y)$,
in the sense that   (\ref{eq:relsimm}) holds for $(x_0,y_0)=({1}/{2\alpha}, {1}/{2\beta}).$
Hence, making use of Proposition \ref{prop:symmMM} and recalling (\ref{eq:MXYesab}),
we have $\tilde M_{U,V}(s,t)=1-\log 2=0.3069$ for $(s,t)\in \tilde D$.
\end{example}
\par
It worthwhile to remark that  the residual mutual information is constant also in other cases.
See [14, Section 3.1] for various comments on the memoryless property
and related information notions. We recall that if the survival function of a non-negative continuous
vector variable $(X,Y)$ satisfies $\barF(x+t,y+t)=\barF(x,y)\,\barF(t,t)$, $\forall \;x,y,t\geq 0$,
then $(X,Y)$ is said to possess the bivariate lack of memory (BLM) property; see, for instance,
\cite{Ro2002}. It thus follows that if $(X,Y)$ has the BLM property, then $M_{X,Y}(t,t)$ does
not depend on $t$. For instance, the bivariate Block-Basu density and the bivariate
Freund density have the BLM  property. See also \cite{Pe2008} for the weak
multivariate lack of memory property within a stochastic model that will be discussed hereafter.
%
\subsection{Dynamic mutual information for time-transformed exponential model}
We recall that a pair of random lifetimes $(X,Y)$ is said to follow the
time-transformed exponential (TTE) model if its joint survival function may be expressed in the following way:
\begin{equation}
 \barF(s,t)=\barW[R_1(s)+R_2(t)],
 \qquad \hbox{for all }s,t\geq 0,
 \label{eq:FstTTE}
\end{equation}
where $\barW:[0,+\infty)\to [0,1]$ is a continuous, convex, and strictly decreasing
survival function, such that $\barW(0)=1$ and $\displaystyle\lim_{r\to +\infty}\barW(r)=0$,
and where  $R_i:[0,+\infty)\to [0,+\infty)$ is a continuous and strictly increasing function,
such that $R_i(0)=0$ and $\displaystyle\lim_{t\to+\infty}R_i(t) = +\infty$ for $i=1,2$.
Clearly, functions $\barW$ and $R_i$, $i=1,2$, provide the time transform and the accumulated
hazards, respectively. Note that the marginal survival functions are given by
$\barF_X(s)= \barW[R_1(s)]$, $s\geq 0$,
and $\barF_Y(t)= \barW[R_2(t)]$, $t\geq 0$. Moreover, if $R_1$ and $R_2$ are identical functions,
then $X$ and $Y$ are exchangeable. The TTE model allows us to study the essential ageing properties
of lifetimes $(X,Y)$ by separating ageing property and dependence and, thus, it deserves wide interest
in reliability theory and survival analysis. Various properties and applications of such semiparametric
model have been investigated recently in, for example,  \cite{BaSp2005}, \cite{LiLin2011}, 
\cite{MuPeRo2010}, \cite{Pe2008}, and \cite{YLB2014}.
\par
Hereafter, we investigate the bivariate dynamic residual mutual information within the TTE model.
\begin{proposition}\label{prop:TTE}
If the survival function of $(X,Y)$ satisfies the TTE model as specified in (\ref{eq:FstTTE}), then
for all $s,t\geq 0$, 
\begin{equation}
\begin{split}
M_{X,Y}(s,t)&= \frac{1}{\barW[R_1(s)+R_2(t)]}  \\
& \times \int_{R_1(s)}^{+\infty}{\rm d}u
\int_{R_2(t)}^{+\infty}\barW''[u+v] \,
 \log \frac{\barW''[u+v]\barW[R_1(s)+R_2(t)]}{\barW'[u+R_2(t)] \,\barW'[R_1(s)+v]}\,{\rm d}v.
\end{split}
\label{eq:MTTE}
\end{equation}
\end{proposition}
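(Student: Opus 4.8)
The plan is to specialize the general identity (\ref{equation:11}) for the residual mutual information to the TTE survival function (\ref{eq:FstTTE}). First I would compute the joint density by differentiating (\ref{eq:FstTTE}) twice: using $f(x,y)=\partial^2\barF(x,y)/\partial x\,\partial y$ together with the chain rule,
$$f(x,y)=\barW''[R_1(x)+R_2(y)]\,R_1'(x)\,R_2'(y), \qquad x,y\geq 0,$$
which is nonnegative because $\barW$ is convex (so $\barW''\geq 0$) and the $R_i$ are increasing.

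Next I would evaluate the two marginal-type integrals in the denominator of $a(x,y;s,t)$ in (\ref{equation:defa}). These are exactly the inner integrals already identified in (\ref{equation:7}) and (\ref{equation:8}): since $\int_t^{+\infty}f(x,v)\,{\rm d}v=-(\partial/\partial x)\barF(x,t)$, differentiating (\ref{eq:FstTTE}) gives $\int_t^{+\infty}f(x,v)\,{\rm d}v=-R_1'(x)\,\barW'[R_1(x)+R_2(t)]$, and symmetrically $\int_s^{+\infty}f(u,y)\,{\rm d}u=-R_2'(y)\,\barW'[R_1(s)+R_2(y)]$. The factors $R_1'(x)$ and $R_2'(y)$ cancel against those in $f(x,y)$, so that
$$a(x,y;s,t)=\frac{\barW''[R_1(x)+R_2(y)]}{\barW'[R_1(x)+R_2(t)]\,\barW'[R_1(s)+R_2(y)]}.$$

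I would then change variables $u=R_1(x)$, $v=R_2(y)$ in the double integral of (\ref{equation:11}). This converts $f(x,y)\,{\rm d}x\,{\rm d}y$ into $\barW''[u+v]\,{\rm d}u\,{\rm d}v$, moves the lower limits to $R_1(s)$ and $R_2(t)$, and turns the logarithm into $\log(\barW''[u+v]/\{\barW'[u+R_2(t)]\,\barW'[R_1(s)+v]\})$. Combined with $\barF(s,t)=\barW[R_1(s)+R_2(t)]$, this already reproduces (\ref{eq:MTTE}) apart from the separate additive summand $\log\barF(s,t)$ coming from the last term of (\ref{equation:11}).

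The one step that needs an idea, rather than bookkeeping, is folding this leftover $\log\barF(s,t)$ into the integrand so as to create the factor $\barW[R_1(s)+R_2(t)]$ inside the logarithm of (\ref{eq:MTTE}). For this I would note that the same change of variables sends $\barW''[u+v]\,{\rm d}u\,{\rm d}v$ back to $f(x,y)\,{\rm d}x\,{\rm d}y$, whose integral over $\{x>s,\ y>t\}$ is precisely $\barF(s,t)=\barW[R_1(s)+R_2(t)]$ by definition of the joint survival function; equivalently, $\barW''[u+v]/\barW[R_1(s)+R_2(t)]$ is a probability density on $[R_1(s),+\infty)\times[R_2(t),+\infty)$. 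Consequently the constant $\log\barW[R_1(s)+R_2(t)]$ coincides with its own average against that density, i.e. $\log\barF(s,t)=(1/\barW[R_1(s)+R_2(t)])\int_{R_1(s)}^{+\infty}\int_{R_2(t)}^{+\infty}\barW''[u+v]\log\barW[R_1(s)+R_2(t)]\,{\rm d}v\,{\rm d}u$. Merging this with the existing integral through $\log A+\log B=\log(AB)$ produces the integrand of (\ref{eq:MTTE}) and completes the proof.
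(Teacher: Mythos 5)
Your proof is correct and follows essentially the same route as the paper: compute the TTE joint and marginal residual densities and perform the change of variables $u=R_1(\cdot)$, $v=R_2(\cdot)$. The only (cosmetic) difference is that you start from (\ref{equation:11}) and must therefore re-absorb the additive $\log\barF(s,t)$ term into the integral, whereas the paper substitutes the densities directly into the definition (\ref{equation:10}), where the factor $\barW[R_1(s)+R_2(t)]$ already sits inside the logarithm.
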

\begin{proof}
Let $s,t\geq 0$. From (\ref{eq:FstTTE}), it follows that 
$$
 f(s,t)=\barW''[R_1(s)+R_2(t)]R_1'(s)R_2'(t).
$$
Hence, from (\ref{equation:9}), (\ref{equation:7}), and (\ref{equation:8}), we have
\begin{equation}
 f_X(x;s,t)=-\frac{\barW'[R_1(x+s)+R_2(t)]R_1'(x+s)}{\barW[R_1(s)+R_2(t)]},
 \qquad x\geq 0,
  \label{equation:18}
\end{equation}
\begin{equation}
 f_Y(y;s,t)=-\frac{\barW'[R_1(s)+R_2(y+t)]R_2'(y+t)}{\barW[R_1(s)+R_2(t)]},
 \qquad y\geq 0,
  \label{equation:19}
\end{equation}
and
\begin{equation}
f_{X,Y}(x,y;s,t)=\frac{\barW''[R_1(x+s)+R_2(y+t)]R_1'(x+s)R_2'(y+t)}{\barW[R_1(s)+R_2(t)]},
\qquad x,y\geq 0.
  \label{equation:17}
\end{equation}
Finally,  (\ref{eq:MTTE}) follows by substituting the above densities in the right-hand side 
of (\ref{equation:10}), and by setting $u=R_1(x+s)$ and $v=R_2(y+t)$.
\end{proof}
\par
The following result can be obtained by means of straightforward calculations.
\begin{corollary}\label{cor:TTE}
Let $(X,Y)$ satisfy the assumptions of Proposition \ref{prop:TTE}. If
$$
 \barW(x)=(1+x)^{-r}, \quad x\geq 0,
 \qquad
 R_1(s)=\alpha s, \quad s\geq 0,
 \qquad
 R_2(t)=\beta t, \quad t\geq 0,
$$
with $r,\alpha, \beta>0$, then
$$
 M_{X,Y}(s,t)=-\frac{1}{r+1}+\log\frac{r+1}{r},
 \qquad s,t\geq 0.
$$
\end{corollary}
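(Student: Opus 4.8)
The plan is to substitute the power-law data straight into the integral representation (\ref{eq:MTTE}) of Proposition \ref{prop:TTE} and to exhibit a rescaling that removes all dependence on $(s,t)$, leaving a constant integral to be evaluated. First I would record $\barW'(x)=-r(1+x)^{-r-1}$ and $\barW''(x)=r(r+1)(1+x)^{-r-2}$, and note $R_1(s)=\alpha s$, $R_2(t)=\beta t$, so that each of the three factors in (\ref{eq:MTTE}) becomes an explicit power of $1+(\,\cdot\,)$. Writing $C:=1+\alpha s+\beta t$, I would then shift $u=\alpha s+p$, $v=\beta t+q$ (turning the region into $p,q\ge 0$) and rescale $p=C\rho$, $q=C\sigma$.

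The crux is that this rescaling cancels $C$ completely. The Jacobian $C^2$, the prefactor $1/\barW[R_1(s)+R_2(t)]=C^{\,r}$, and the factor $(C+p+q)^{-r-2}=C^{-r-2}(1+\rho+\sigma)^{-r-2}$ combine to leave $r(r+1)(1+\rho+\sigma)^{-r-2}$ with no surviving power of $C$, while inside the logarithm the $C$-powers in numerator and denominator cancel as well. One is left with
$$
 M_{X,Y}(s,t)=\int_0^{+\infty}\!\!\int_0^{+\infty} r(r+1)(1+\rho+\sigma)^{-r-2}\,
 \log\frac{(r+1)(1+\rho+\sigma)^{-r-2}}{r\,(1+\rho)^{-r-1}(1+\sigma)^{-r-1}}\,{\rm d}\rho\,{\rm d}\sigma,
$$
which is manifestly free of $s$ and $t$. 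Conceptually this expresses the scale self-similarity of the standard bivariate Pareto residual law; alternatively one could invoke the invariance of mutual information under the separate affine maps $x\mapsto\alpha(x-s)$, $y\mapsto\beta(y-t)$ to see constancy a priori, so that it suffices to compute at $s=t=0$ with $\alpha=\beta=1$.

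To evaluate the constant I would split the logarithm as $\log\frac{r+1}{r}-(r+2)\log(1+\rho+\sigma)+(r+1)[\log(1+\rho)+\log(1+\sigma)]$ and integrate the three pieces separately. The leading piece contributes $\log\frac{r+1}{r}$ since the density integrates to one. For the marginal pieces, integrating out one variable via $\int_0^{+\infty} r(r+1)(1+\rho+\sigma)^{-r-2}\,{\rm d}\sigma=r(1+\rho)^{-r-1}$ reduces each to $\int_0^{+\infty} r(1+\rho)^{-r-1}\log(1+\rho)\,{\rm d}\rho$, and the substitution $w=\log(1+\rho)$ turns this into $\int_0^{+\infty} r\,w\,e^{-rw}\,{\rm d}w=1/r$; for the joint piece the identity $\iint_{\rho,\sigma\ge0} g(\rho+\sigma)\,{\rm d}\rho\,{\rm d}\sigma=\int_0^{+\infty} w\,g(w)\,{\rm d}w$ collapses it to one dimension, evaluated again by the same logarithmic substitution. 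Collecting the terms yields $\log\frac{r+1}{r}+2(r+1)\cdot\frac1r-(r+2)\cdot\frac{2r+1}{r(r+1)}=\log\frac{r+1}{r}-\frac{1}{r+1}$, as claimed. The main obstacle is not depth but bookkeeping: spotting the $C$-rescaling that forces independence of $(s,t)$, and correctly tracking the three logarithmic integrals (in particular the $w\,g(w)$ collapse for the joint term), which is where a sign or combinatorial slip would most easily hide.
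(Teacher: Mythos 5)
Your proposal is correct and follows the route the paper intends: the paper offers no details beyond ``straightforward calculations'' from Proposition \ref{prop:TTE}, and your substitution of $\barW(x)=(1+x)^{-r}$ into (\ref{eq:MTTE}), the rescaling by $C=1+\alpha s+\beta t$ that eliminates $(s,t)$, and the three logarithmic integrals all check out, yielding $\log\frac{r+1}{r}-\frac{1}{r+1}$ as claimed.
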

\par
From Corollary \ref{cor:TTE} we show that if $(X,Y)$ has bivariate Lomax (Pareto type II) joint survival
function then $M_{X,Y}(s,t)$ is constant (see also [14, Section 5.2]).
Note that in this case $a(x,y;s,t)$ is not monotone; so that the bound (\ref{equation:16}) is not useful.
\subsection{Dynamic mutual information for truncated TTE model}
We now consider a TTE model for {\em truncated\/} random lifetimes $(X,Y)$. Specifically, 
we assume such that the nonnegative random variables $X$ and $Y$  are upper bounded 
through a suitable function. Unlike the previous section, we now assume that 
$\barW(r)$ is a continuous, convex and strictly decreasing one-dimensional survival function 
for all $r\in [0, \omega]$, where $\omega$ is a fixed positive real number, such that 
$\barW(0)=1$ and $\barW(\omega)=0$. Moreover, $R_1(\cdot)$ and $R_2(\cdot)$ are 
continuous and strictly increasing functions such that $R_1(0)=R_2(0)=0$, and the set
$$
 D_{\omega}:= \{(s,t)\in\mathbb{R}^2: s\geq 0, \; t\geq 0, \; R_1(s)+R_2(t)\leq \omega \}
$$
is not empty. Hence, there exists a continuous and strictly decreasing function $t= \ell_{\omega}(s)$,
defined for $0\leq s\leq R_1^{-1}(\omega)$, and such that $R_1(s)+R_2(\ell_{\omega}(s))=\omega$
for all $s\in [0, R_1^{-1}(\omega)]$, with $\ell_{\omega}(0)=R_2^{-1}(\omega)$ and
$\ell_{\omega}(R_1^{-1}(\omega))=0$. These assumptions thus lead to the following
{\em truncated TTE model\/} for the joint survival function of $(X,Y)$:
\begin{equation}
 \barF(s,t)=\barW[R_1(s)+R_2(t)]
 \qquad \hbox{for all }(s,t)\in D_{\omega}.
 \label{eq:FstTTEtrunc}
\end{equation}
Similarly to Proposition \ref{prop:TTE}, we thus have the following result for the
dynamic residual mutual information within the above model.
\begin{proposition}\label{prop:TTEtr}
If the joint survival function of $(X,Y)$ satisfies the TTE model as specified in 
(\ref{eq:FstTTEtrunc}), with $\barW'(\omega)=0$, then, for all $s,t\in D_{\omega}$, 
\begin{equation*}
\begin{split}
M_{X,Y}(s,t)&= \frac{1}{\barW[R_1(s)+R_2(t)]}  \\
& \times \int_{R_1(s)}^{\omega-R_2(t)} {\rm d}u \int_{R_2(t)}^{\omega-u} \barW''[u+v] \,
 \log \frac{\barW''[u+v]\barW[R_1(s)+R_2(t)]}{\barW'[u+R_2(t)] \,\barW'[R_1(s)+v]}\,{\rm d}v.
\end{split}
\end{equation*}
\end{proposition}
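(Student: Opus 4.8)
The plan is to reproduce the argument of Proposition \ref{prop:TTE} step by step, tracking carefully the two effects introduced by truncation: the restriction of the support of $(X,Y)$ to $D_\omega$ and the boundary terms that truncation creates. First I would observe that, exactly as in the untruncated case, differentiating (\ref{eq:FstTTEtrunc}) twice on the interior of $D_\omega$ gives $f(s,t)=\barW''[R_1(s)+R_2(t)]R_1'(s)R_2'(t)$. Substituting this into (\ref{equation:9}) shows that the joint residual density retains precisely the form (\ref{equation:17}), namely
\[
f_{X,Y}(x,y;s,t)=\frac{\barW''[R_1(x+s)+R_2(y+t)]\,R_1'(x+s)\,R_2'(y+t)}{\barW[R_1(s)+R_2(t)]},
\]
now supported on the truncated region $\{(x,y):R_1(x+s)+R_2(y+t)\leq\omega\}$ instead of the whole first quadrant.

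The delicate point, which I expect to be the main obstacle, is to verify that the marginal residual densities still coincide with (\ref{equation:18}) and (\ref{equation:19}). Because the lifetimes are bounded, the integral in (\ref{equation:7}) runs only up to $y_{\max}=R_2^{-1}(\omega-R_1(x+s))$ rather than to $+\infty$; performing the substitution $w=R_2(y)$ then yields
\[
\int_t^{y_{\max}} f(x+s,y)\,{\rm d}y=R_1'(x+s)\bigl(\barW'(\omega)-\barW'[R_1(x+s)+R_2(t)]\bigr).
\]
Here the hypothesis $\barW'(\omega)=0$ is exactly what cancels the boundary contribution, leaving $f_X(x;s,t)$ in the form (\ref{equation:18}); the symmetric computation produces (\ref{equation:19}). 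Equivalently, $\barW'(\omega)=0$ is the condition guaranteeing that $(X,Y)$ carries no singular probability mass on the curve $R_1(x)+R_2(y)=\omega$, so that the absolutely continuous density formulas are consistent; without it an extra $\barW'(\omega)$ term would survive and the clean expressions would fail.

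Finally I would insert the three densities into the definition (\ref{equation:10}) and apply the same change of variables $u=R_1(x+s)$, $v=R_2(y+t)$ used in Proposition \ref{prop:TTE}. The integrand reduces to exactly the logarithmic expression appearing in (\ref{eq:MTTE}); the only difference lies in the domain of integration. The constraints $x,y\geq 0$ translate into $u\geq R_1(s)$ and $v\geq R_2(t)$, while the truncation $R_1(x+s)+R_2(y+t)\leq\omega$ becomes $u+v\leq\omega$. Describing this region by letting $u$ range over $[R_1(s),\,\omega-R_2(t)]$ and, for each such $u$, letting $v$ range over $[R_2(t),\,\omega-u]$, gives the stated iterated integral and completes the proof.
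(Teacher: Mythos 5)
Your proposal is correct and follows essentially the same route as the paper's own (very terse) proof: establish that the three residual densities retain the forms (\ref{equation:17})--(\ref{equation:19}) on the truncated supports, with $\barW'(\omega)=0$ eliminating the boundary term (equivalently, the singular mass on $R_1(x)+R_2(y)=\omega$), and then repeat the change of variables of Proposition \ref{prop:TTE} over the restricted domain. The explicit computation of the boundary contribution $R_1'(x+s)\bigl(\barW'(\omega)-\barW'[R_1(x+s)+R_2(t)]\bigr)$ is a useful elaboration of the paper's one-line remark that $\barW'(\omega)=0$ is essential for the densities to integrate to unity.
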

\begin{proof}
Under the given assumptions the densities  in  (\ref{equation:9}), (\ref{equation:7}), 
and (\ref{equation:8}) can still be expressed respectively as in (\ref{equation:18}) for
$0\leq x\leq R_1^{-1}(\omega-R_2(t))-s$, as in (\ref{equation:19}) for
$0\leq y\leq R_2^{-1}(\omega-R_1(s))-t$, and as in (\ref{equation:17}) for all nonnegative
$x,y$ such that $R_1(x+s)+R_2(y+t)\leq \omega$. Note that the assumption $\barW'(\omega)=0$
is essential  to ascertain that the integral of $f_{X,Y}(x,y;s,t)$ is unity.
The proof thus proceeds similarly as that of Proposition \ref{prop:TTE}.
\end{proof}
\par
The following result can be obtained via direct calculations.
\begin{corollary}\label{cor:TTEtrunc}
Let $(X,Y)$ satisfy the assumptions of Proposition \ref{prop:TTEtr}. If
$$
 \barW(x)=\left(\frac{x}{\omega}-1\right)^{2}, \quad 0\leq x\leq \omega,
$$
then
$M_{X,Y}(s,t)=1-\log{2}=0.3069$, $(s,t)\in  D_{\omega}$. 
\end{corollary}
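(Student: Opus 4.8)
The plan is to specialize the integral representation of Proposition~\ref{prop:TTEtr} to the given time transform and to exhibit the cancellation of all dependence on $(s,t)$. First I would record the derivatives of $\barW(x)=(x/\omega-1)^2$, namely
$$
 \barW'(x)=\frac{2(x-\omega)}{\omega^2},
 \qquad
 \barW''(x)=\frac{2}{\omega^2},
$$
observing that $\barW''$ is constant, that $\barW(0)=1$ and $\barW(\omega)=0$, and that $\barW'(\omega)=0$, so the hypotheses of Proposition~\ref{prop:TTEtr} are indeed satisfied.

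Writing $p:=R_1(s)$, $q:=R_2(t)$ and $A:=\omega-p-q\geq 0$, I would substitute these derivatives into the formula for $M_{X,Y}(s,t)$. Since $\barW(z)=(z-\omega)^2/\omega^2$, all powers of $\omega$ cancel and the argument of the logarithm collapses to
$$
 \frac{\barW''[u+v]\,\barW[p+q]}{\barW'[u+q]\,\barW'[p+v]}
 =\frac{(p+q-\omega)^2}{2\,(u+q-\omega)(p+v-\omega)},
$$
while the prefactor $\barW[p+q]^{-1}$ combines with the constant $\barW''=2/\omega^2$ to leave an overall factor $2/A^2$.

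The \emph{key step} is a change of variables that eliminates $(s,t)$ altogether. Shifting $u\mapsto u-p$ and $v\mapsto v-q$ turns the integration region into the triangle $\{u,v\geq 0,\ u+v\leq A\}$ and replaces the three factors $u+q-\omega$, $p+v-\omega$, $p+q-\omega$ by $u-A$, $v-A$, $-A$ respectively; rescaling $u=A\xi$, $v=A\eta$ then maps this onto the standard simplex $\{\xi,\eta\geq 0,\ \xi+\eta\leq 1\}$, reduces the log-argument to $[2(1-\xi)(1-\eta)]^{-1}$, and produces a Jacobian $A^2$ that cancels the prefactor $2/A^2$ exactly. This is precisely where one sees that $M_{X,Y}(s,t)$ cannot depend on $s$ or $t$, since $A$ has disappeared.

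What remains is the evaluation of the constant
$$
 M_{X,Y}(s,t)=2\int_0^1\!\!\int_0^{1-\xi}
 \Big[-\log 2-\log(1-\xi)-\log(1-\eta)\Big]\,{\rm d}\eta\,{\rm d}\xi,
$$
which is elementary: the simplex has area $\tfrac{1}{2}$, and using $\int_0^1 w\log w\,{\rm d}w=-\tfrac{1}{4}$ one finds that each of the two logarithmic terms contributes $-\tfrac{1}{4}$. Collecting the three contributions gives $\tfrac{1}{2}-\tfrac{1}{2}\log 2$, and the overall factor $2$ yields $M_{X,Y}(s,t)=1-\log 2$. I expect no genuine obstacle beyond careful bookkeeping: the only delicate point is the sign tracking, since both $u+q-\omega$ and $p+v-\omega$ are nonpositive on the original domain, so that their product is nonnegative and the logarithm is well defined.
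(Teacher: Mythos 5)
Your computation is correct and matches the paper's (unstated) route: the paper simply asserts that this corollary follows from Proposition~\ref{prop:TTEtr} ``via direct calculations,'' and your specialization of $\barW(x)=(x/\omega-1)^2$, the shift-and-rescale to the standard simplex that removes all dependence on $(s,t)$, and the elementary evaluation of the remaining constant is exactly that calculation. The only quibble is a wording slip near the end: each of the terms $-\log(1-\xi)$ and $-\log(1-\eta)$ contributes $+\tfrac{1}{4}$ (not $-\tfrac{1}{4}$), which is what your stated total $\tfrac{1}{2}-\tfrac{1}{2}\log 2$ and final answer $1-\log 2$ actually require.
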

%
\section{Dynamic mutual information for ordered data}
The approach developed in the previous sections can also be adopted  to study the mutual 
information in the presence of conditioning expressed as in (\ref{eq:domains3}) and 
(\ref{eq:domains4}). Here we restrict ourselves to consider models based on ordered data, 
with an application to order statistics. For $n\geq 2$, consider a system with  $n$  
components, having  independent and identically distributed random lifetimes. 
Assume that the failures of the components are observed upon a test. Suppose that the 
$i$th failure occurs before time $s$  and  $n-j+1$ $(j>i)$ components are still alive at 
time $t$, with $0<s<t$. For $1\leq i<j\leq n$ we can define the following random variables:
\begin{equation}
 T_{i,j:n}(s, t)=[(X_{i:n}, X_{j:n})\,|\,X_{i:n}\leq s, X_{j:n}> t], \qquad 0< s<t,
 \label{e1}
\end{equation}
where $X_{r:n}$ denotes the $r$th order statistic. We recall that Ebrahimi {\em et al.}\ \cite{ESZ2004}
defined and studied mutual information between consecutive ordinary order statistics.
\par
Let us now define dynamic mutual information measures for order  statistics. As a case study, we
consider \eqref{e1} for  $i=1$ and $j=n$, i.e.\ we assume that the first failure occurs before time $s$,
and  the last failure occurs after time $t$. Then the joint PDF of $T_{1,n:n}(s, t)$
and the marginal PDFs of $[X_{1:n}\,|\,X_{1:n}\leq s, X_{n:n}> t]$ and
$[X_{n:n}\,|\,X_{1:n}\leq s, X_{n:n}> t]$ are needed. Let $f(x)$ and $F(x)$ denote respectively the
common PDF and the distribution function of the components' lifetimes. Since
(see, for example, \cite{DaNa2003})
$$
  f_{1,n:n}(x,y)=n(n-1)[F(y)-F(x)]^{n-2}f(x)f(y), \qquad 0<x<y<+\infty,
$$
for $0<s<t$, we have
\begin{eqnarray}
 \mathbb P(X_{1:n}\leq s, X_{n:n}> t) \!\!\! &=& \!\!\!
 \int_{0}^{s}{\rm d}x\int_{t}^{+\infty} f_{1,n:n}(x,y){\rm d}y
 \nonumber \\
 &=& \!\!\!  \int_{0}^{s} nf(x)\left\{[1-F(x)]^{n-1}-[F(t)-F(x)]^{n-1} \right\}dx
 \nonumber \\
 &=& \!\!\!  1-[F(t)]^n+[F(t)-F(s)]^n-[1-F(s)]^n.
  \label{e2}
\end{eqnarray}
Let 
\begin{equation*}
  f^*_{1:n}(x; s,t)=\frac{({\partial}/{\partial x}) \mathbb P(X_{1:n}\leq x < t < X_{n:n})}
  {\mathbb P(X_{1:n}\leq s, X_{n:n}> t)}
\end{equation*}
denote the PDF of $[X_{1:n}\,|\,X_{1:n}\leq s, X_{n:n}> t]$. 
Hence, using \eqref{e2}, we obtain
\begin{equation}
  f^*_{1:n}(x; s,t)=\frac{n\left\{[1-F(x)]^{n-1}-[F(t)-F(x)]^{n-1} \right\}f(x)}
  {1-[F(t)]^n+[F(t)-F(s)]^n-[1-F(s)]^n}, \qquad  0<x<s<t.
\label{e3}
\end{equation}
Similarly, denoting the PDF of  $[X_{n:n}\,|\,X_{1:n}\leq s, X_{n:n}> t]$ by $f^*_{n:n}(y; s,t)$, we have
\begin{equation}
  f^*_{n:n}(y; s,t)=\frac{n\left\{[F(y)]^{n-1}-[F(y)-F(s)]^{n-1} \right\}f(y)}
  {1-[F(t)]^n+[F(t)-F(s)]^n-[1-F(s)]^n}, \qquad 0<s<t<y.
 \label{e4}
\end{equation}
Also, let $f^*_{1, n:n}(x, y; s,t)$ be the PDF of $T_{1,n:n}(s, t)$. Then,  it is given by
\begin{equation}
 f^*_{1, n:n}(x, y; s,t)=\frac{n(n-1)[F(y)-F(x)]^{n-2}f(x)f(y)}
 {1-[F(t)]^n+[F(t)-F(s)]^n-[1-F(s)]^n}, \qquad 0< x<s< t<y.
 \label{e5}
\end{equation}
By virtue of \eqref{e3}, \eqref{e4}, and \eqref{e5}, the dynamic mutual information of $T_{1,n:n}(s, t)$ can
thus be defined as
\begin{equation}
 M^*_{1, n:n}(s,t)=\int_{0}^{s}{\rm d}x\int_{t}^{+\infty}  f^*_{1, n:n}(x, y; s,t)
 \log \frac{f^*_{1, n:n}(x, y; s,t) }{f^*_{1:n}(x; s,t) f^*_{n:n}(y; s,t) } \,{\rm d}y, \qquad 0< s<t.
 \label{e6}
\end{equation}
Obviously, \eqref{e6} depends on $s$, $t$, $n$,  $F(s)$ and $F(t)$. Also, $M^*_{1, 2:2}(s,t)=0$ for all $0<s<t$.  However, in agreement with [17, Theorem 3.3(a)], in the following we show that $M^*_{1, n:n}(s,t)$
is distribution-free under suitable assumptions.
\par
According to the previous comments, $s$ and $t$ can be seen as inspections times for the $n$-component 
system. The knowledge of $[X_{1:n}\leq s, X_{n:n}> t]$ thus means that, upon inspection, at least one 
failed component has been detected at time $s$, and at least one  component is functioning at time $t$.
We can fix $s$ and $t$ as quantiles of $F$, say as the $p$th and $q$th quantiles, respectively, i.e.
\begin{equation}
 s=\xi_p=F^{-1}(p),  \qquad  t=\xi_q=F^{-1}(q),
 \qquad  0<p<q<1,
 \label{eqst}
\end{equation}
where $F^{-1}$   is the generalized inverse of $F$.
Denote by $H_n(p,q)$ the joint probability \eqref{e2} when $s$ and $t$ are chosen as in \eqref{eqst}, i.e.
\begin{equation}
 H_n(p,q)=1-q^n+(q-p)^n-(1-p)^n,
 \qquad  0<p<q<1.
 \label{eqdefH}
\end{equation}
Moreover, in order to show that $M^*_{1, n:n}(s,t)$ is distribution-free, for $p,q\in (0,1)$, we set
\begin{equation}
 K_n(p,q):=\int_0^p \left[(1-u)^{n-1}-(q-u)^{n-1}\right]\log\left((1-u)^{n-1}-(q-u)^{n-1}\right){\rm d}u.
 \label{eqdefK}
\end{equation}
\begin{proposition}\label{prop:M1n}
Let $n\geq 2$. If $s$ and $t$ are chosen as in \eqref{eqst}, with $0<p<q<1$, then the dynamic mutual
information of $T_{1,n:n}(s, t)$ is given by
\begin{eqnarray}
 M^*_{1, n:n}(\xi_p,\xi_q) \!\!\! &=& \!\!\! \log\Big[\frac{n-1}{n}H_n(p,q)\Big]- \frac{(n-2) (2 n-1)}{n(n-1)} 
 \nonumber \\
 \!\!\! & -& \!\!\! \frac{n}{H_n(p,q)}  \Bigg\{ K_n(p,q)+K_n(1-q,1-p)+\frac{n-2}{n}
 \nonumber \\
 \!\!\! & \times& \!\!\!     [(1-p)^n \log(1-p)+q^n \log(q)-(q-p)^n \log(q-p)] \Bigg\},
 \label{eqMn}
\end{eqnarray}
where $H_n$ and $K_n$ are given in \eqref{eqdefH} and \eqref{eqdefK}, respectively.
\end{proposition}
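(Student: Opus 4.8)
The plan is to reduce the integral \eqref{e6} to a distribution-free form by means of the probability integral transform, and then to evaluate the resulting elementary (but laborious) integrals piece by piece. First I would substitute $u=F(x)$ and $v=F(y)$ in \eqref{e6}; since $s=\xi_p$ and $t=\xi_q$ give $F(s)=p$ and $F(t)=q$, the region $0<x<s<t<y$ is carried onto the fixed rectangle $0<u<p$, $q<v<1$, while $f(x)\,{\rm d}x={\rm d}u$ and $f(y)\,{\rm d}y={\rm d}v$ absorb all the densities. Writing $A(u):=(1-u)^{n-1}-(q-u)^{n-1}$ and $B(v):=v^{n-1}-(v-p)^{n-1}$ for the brackets appearing in \eqref{e3} and \eqref{e4}, the joint density \eqref{e5} becomes $\frac{n(n-1)}{H_n(p,q)}(v-u)^{n-2}$ on this rectangle, and the ratio inside the logarithm of \eqref{e6} collapses to
\[
 \frac{n-1}{n}\,\frac{(v-u)^{n-2}\,H_n(p,q)}{A(u)\,B(v)}.
\]
Because nothing here depends on $F$ any longer, this already establishes that $M^*_{1,n:n}(\xi_p,\xi_q)$ is distribution-free; it remains only to carry out the integration.

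Next I would split the logarithm as $\log\frac{n-1}{n}+\log H_n(p,q)+(n-2)\log(v-u)-\log A(u)-\log B(v)$ and integrate term by term against the density $\frac{n(n-1)}{H_n(p,q)}(v-u)^{n-2}$. A preliminary computation gives $\int_0^p{\rm d}u\int_q^1 (v-u)^{n-2}\,{\rm d}v=H_n(p,q)/[n(n-1)]$, so the density integrates to one; hence the two constant terms contribute exactly $\log[\frac{n-1}{n}H_n(p,q)]$, the leading term of \eqref{eqMn}. For the $-\log A(u)$ term I would integrate in $v$ first: since $\int_q^1(v-u)^{n-2}\,{\rm d}v=A(u)/(n-1)$, the double integral collapses to $\frac{1}{n-1}\int_0^p A(u)\log A(u)\,{\rm d}u=K_n(p,q)/(n-1)$, contributing $-\frac{n}{H_n(p,q)}K_n(p,q)$. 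Symmetrically, for the $-\log B(v)$ term I would integrate in $u$ first, using $\int_0^p(v-u)^{n-2}\,{\rm d}u=B(v)/(n-1)$; the substitution $w=1-v$ then turns $\int_q^1 B(v)\log B(v)\,{\rm d}v$ into exactly $K_n(1-q,1-p)$, yielding $-\frac{n}{H_n(p,q)}K_n(1-q,1-p)$.

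The main obstacle is the $(n-2)\log(v-u)$ term, which does not decouple and requires two integrations by parts. I would first evaluate $\int_q^1(v-u)^{n-2}\log(v-u)\,{\rm d}v$ via $\int r^{n-2}\log r\,{\rm d}r=\frac{r^{n-1}}{n-1}\log r-\frac{r^{n-1}}{(n-1)^2}$, and then integrate the outcome over $u\in(0,p)$ using the substitutions $z=1-u$ and $z=q-u$ together with $\int z^{n-1}\log z\,{\rm d}z=\frac{z^{n}}{n}\log z-\frac{z^{n}}{n^2}$. Careful bookkeeping separates this contribution into a logarithmic part equal to $-\frac{n-2}{H_n(p,q)}[(1-p)^n\log(1-p)+q^n\log q-(q-p)^n\log(q-p)]$ and a purely rational remainder $-\frac{n-2}{n-1}-\frac{n-2}{n}=-\frac{(n-2)(2n-1)}{n(n-1)}$, in which the term $\frac{n-2}{n-1}$ comes from the $1/(n-1)^2$ constant of the inner integration by parts and $\frac{n-2}{n}$ from the $1/n^2$ constant of the outer one. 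Collecting the leading constant, the two $K_n$ contributions, and this last term reproduces \eqref{eqMn} exactly. The only delicate points are keeping the factor $\frac{n-1}{n}$ in the log-ratio (which supplies the $-\log n$ inside the leading term) and verifying that the two rational pieces, carrying factors $H_n(p,q)/[n(n-1)^2]$ and $H_n(p,q)/[n^2(n-1)]$, combine with the prefactor $n(n-1)/H_n(p,q)$ into the announced constant.
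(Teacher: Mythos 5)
Your proof is correct and follows the same route the paper intends: substituting $u=F(x)$, $v=F(y)$ in \eqref{e6} with the densities \eqref{e3}--\eqref{e5} and integrating term by term (the paper's own proof states only that \eqref{eqMn} ``follows from \eqref{e6}'' and omits all details). Your bookkeeping of the $K_n$ terms, the normalization $\int_0^p\!\int_q^1(v-u)^{n-2}\,{\rm d}v\,{\rm d}u=H_n(p,q)/[n(n-1)]$, and the rational constant $-(n-2)(2n-1)/[n(n-1)]$ all check out.
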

\begin{proof}
Equation \eqref{eqMn} follows from  \eqref{e6}, and by using densities \eqref{e3}, \eqref{e4} and \eqref{e5}.
\end{proof}
%
\begin{figure}[t]
\begin{center}
\epsfxsize=8.5cm
 \epsfbox{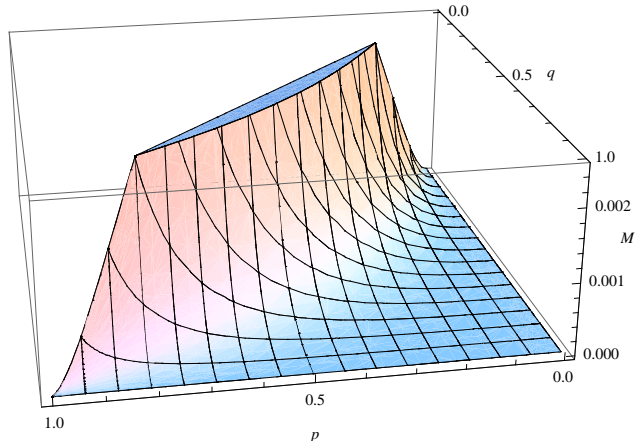}
\caption{\small{Plot of the mutual information given in \eqref{eqMn}, for $n=3$}.}
\label{fig:3}
\end{center}
\end{figure}
\begin{figure}[t]
\begin{center}
\epsfxsize=7cm
 \epsfbox{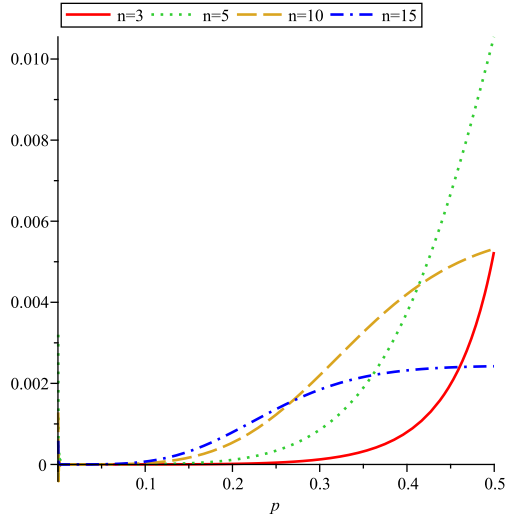}
\caption{\small{Plot of the mutual information given in \eqref{eqMn} for $n=3,5,10, 15$ and $q=1-p$}.}
\label{fig:p=1-q}
\end{center}
\end{figure}
%
\par
If $n=2$ the analysis of $T_{1,2:2}(s, t)$ is trivial. Indeed, from Proposition \ref{prop:M1n} it is not 
hard to see that $M^*_{1,2:2}(\xi_p,\xi_q)=0 $ for all $ 0<p<q<1.$  Also, a closed-form expression 
for $M^*_{1, n:n}(\xi_p,\xi_q)$ can be obtained from \eqref{eqMn} when $n=3$; however, we 
omit it being lengthy and tedious. We limit ourselves to show in Figure \ref{fig:3} the plot of 
$M^*_{1, 3:3}(\xi_p,\xi_q)$ for $0<p<q<1$. Furthermore, in Figure \ref{fig:p=1-q} we show 
the plot of $M^*_{1, n:n}(\xi_p,\xi_{1-p})$ for some selected values of $n$ and $0<p<\frac{1}{2}$, 
in the special case $q=1-p$. From Figure \ref{fig:p=1-q}, we confirm that the mutual information  
$M^*_{1, n:n}(\xi_p,\xi_{1-p})$ is increasing in $p$, as expected.
%
\section{A copula-based approach}
The copula function is an useful tool in studying the dependency in multivariate distributions; 
see, for example, \cite{DuSe2015}. Sklar's Theorem asserts that, 
given a copula $C:[0,1]^2\to [0,1]$, the joint cumulative distribution function of $(X,Y)$ 
can be written in terms of the marginals as
\begin{equation}
 F(x, y) =C(F_X(x), F_Y(y)), \qquad x,y\in\mathbb{R},
 \label{eq:relFxyC}
\end{equation}
the copula being unique if the marginals are continuous. The corresponding copula density is given by
$$
 c(u,v)=\frac{\partial^2}{\partial u\partial v }C(u, v)
 =\frac{\partial^2}{\partial u\partial v}F(F^{-1}_X(u), F^{-1}_Y(v)), \qquad u,v\in(0,1),
$$
where $F^{-1}_X$ and $F^{-1}_Y$ denote the generalized inverse of the marginals.
Thus, the joint PDF of $(X,Y)$ can be expressed as
\begin{equation}
 f(x,y)=f_X(x)f_Y(y)\,c(F_X(x), F_Y(y)) , \qquad x,y\in\mathbb{R},
 \label{eq:fxycop}
\end{equation}
so that the mutual information can be written in terms of the copula density as (see, for
example, \cite{DaDo2003})
$$
 M_{X,Y}=\int_{0}^{1} {\rm d}u\int_{0}^{1} c(u,v) \log c(u, v)\,{\rm d}v.
$$
This confirms that the mutual information does not depend on the marginal distributions, and
also that the copula entails all essential information on the dependence between $X$ and $Y$.
\par
Let us now represent the dynamic past mutual information in terms of the copula function.
We first make use of \eqref{eq:fxycop} in \eqref{equation:23} and perform the substitution 
$v=F_Y(y)$ in the integral. Moreover, similarly to \eqref{eqst}, we set
\begin{equation}
 s=\xi_p=F_X^{-1}(p), \qquad  t=\xi_q=F_Y^{-1}(q), \qquad p,q\in (0,1),
 \label{eq:stpq}
\end{equation}
so that the density of the marginal past lifetime $[X\,|\,X\leq F_X^{-1}(p), Y\leq F_Y^{-1}(q)]$ 
can be expressed as
\begin{equation}
\tilde f_X(x;\xi_p,\xi_q)=\frac{ f_X(x)}{C(p,q)}\,\int_0^q c(F_X(x), v) \,{\rm d}v ,
 \qquad 0\leq x\leq F_X^{-1}(p),
 \label{eq:ftildeX}
\end{equation}
the right-hand side of \eqref{eq:ftildeX} being a weighted density of $X$. Similarly, 
from \eqref{equation:24}, it follows that  the density of 
$[Y\,|\,X\leq F_X^{-1}(p), Y\leq F_Y^{-1}(q)]$ is given by
\begin{equation}
 \tilde f_Y(y;\xi_p,\xi_q) =\frac{f_Y(y)}{C(p, q)}\,\int_0^p c(u, F_Y(y)) \,{\rm d}u,
 \qquad 0\leq y\leq F_Y^{-1}(q).
 \label{eq:ftildeY}
\end{equation}
Finally, for the bivariate past lifetimes
\begin{equation}
 [(X,Y)\,|\,X\leq F_X^{-1}(p),Y\leq F_Y^{-1}(q)],
 \qquad p,q\in(0,1)
 \label{eq:defXYcond}
\end{equation}
the joint PDF \eqref{equation:27} becomes
\begin{equation}
 \tilde f_{X,Y}(x,y;\xi_p,\xi_q)=f_X(x)f_Y(y)\,\frac{c(F_X(x), F_Y(y))}{C(p, q)},
  \label{eq:jointftildeXY}
\end{equation}
for $0\leq x\leq F_X^{-1}(p)$ and $0\leq y\leq F_Y^{-1}(q)$.
\begin{proposition}\label{prop:copulapq}
For all $p,q\in(0,1)$ the mutual information of the bivariate past lifetimes \eqref{eq:defXYcond} is given by
\begin{eqnarray}
 \tilde M_{X,Y}(\xi_p,\xi_q) \!\!\! &= & \!\!\! \log [C(p,q)] 
 \nonumber \\
 &+ & \!\!\! \frac{1}{C(p, q)} \int_{0}^{p}  {\rm d}u \int_{0}^{q} {c(u, v)}\,
 \log\frac{c(u, v)}
 {\int_{0}^{q} c(u, w) \,{\rm d}w\, \int_{0}^{p}c(z, v)\,{\rm d}z}\,{\rm d}v.
 \label{eq:MpqXY}
\end{eqnarray}
\end{proposition}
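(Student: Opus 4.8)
The plan is to substitute directly into the definition \eqref{equation:29} of the dynamic past mutual information, evaluated at the quantile choices \eqref{eq:stpq}, the three copula-based densities \eqref{eq:jointftildeXY}, \eqref{eq:ftildeX}, and \eqref{eq:ftildeY}, and then to collapse the resulting double integral by the change of variables $u=F_X(x)$, $v=F_Y(y)$. First I would write $\tilde M_{X,Y}(\xi_p,\xi_q)=\int_0^{\xi_p}{\rm d}x\int_0^{\xi_q}\tilde f_{X,Y}(x,y;\xi_p,\xi_q)\,\log[\tilde f_{X,Y}/(\tilde f_X\,\tilde f_Y)]\,{\rm d}y$ and concentrate on simplifying the argument of the logarithm.

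Forming the ratio $\tilde f_{X,Y}/(\tilde f_X\,\tilde f_Y)$ from \eqref{eq:jointftildeXY}, \eqref{eq:ftildeX}, and \eqref{eq:ftildeY}, the marginal factors $f_X(x)\,f_Y(y)$ cancel between numerator and denominator, while the three occurrences of $C(p,q)$ recombine (as $C^2/C$) into a single surviving factor $C(p,q)$ in the numerator. Thus the logarithm splits as $\log C(p,q)+\log\bigl(c(F_X(x),F_Y(y))/[\int_0^q c(F_X(x),w)\,{\rm d}w\,\int_0^p c(z,F_Y(y))\,{\rm d}z]\bigr)$. The constant term $\log C(p,q)$ pulls out of the integral and is multiplied by $\int_0^{\xi_p}{\rm d}x\int_0^{\xi_q}\tilde f_{X,Y}(x,y;\xi_p,\xi_q)\,{\rm d}y=1$, since $\tilde f_{X,Y}(\cdot,\cdot;\xi_p,\xi_q)$ is a genuine probability density on its rectangle; this produces the leading term $\log[C(p,q)]$ of \eqref{eq:MpqXY}.

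For the remaining term I would insert \eqref{eq:jointftildeXY} once more, so that the integrand carries the factor $f_X(x)\,f_Y(y)\,c(F_X(x),F_Y(y))/C(p,q)$, and then apply the substitution $u=F_X(x)$, $v=F_Y(y)$, with ${\rm d}u=f_X(x)\,{\rm d}x$ and ${\rm d}v=f_Y(y)\,{\rm d}y$. Because $X$ and $Y$ are nonnegative and absolutely continuous, we have $F_X(0)=F_Y(0)=0$, so the rectangle $[0,\xi_p]\times[0,\xi_q]$ maps onto $[0,p]\times[0,q]$; the leftover marginal densities are absorbed into ${\rm d}u\,{\rm d}v$, every occurrence of $F_X(x)$ and $F_Y(y)$ becomes $u$ and $v$, and the $1/C(p,q)$ prefactor survives. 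This yields precisely the copula double integral on the right-hand side of \eqref{eq:MpqXY}.

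The computation is essentially a direct substitution, so I do not expect a deep obstacle; the only point demanding care is the bookkeeping of the dummy variables in the inner normalising integrals of \eqref{eq:ftildeX} and \eqref{eq:ftildeY}, which must be renamed (to $w$ and $z$, as in the statement) so as not to collide with the outer integration variables $u$ and $v$ after the substitution. One should also verify that the support constraints $0\le x\le\xi_p$ and $0\le y\le\xi_q$ translate correctly into $0\le u\le p$ and $0\le v\le q$, which again relies on $F_X(0)=F_Y(0)=0$.
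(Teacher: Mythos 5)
Your proposal is correct and follows essentially the same route as the paper's proof: substitute the copula-based densities \eqref{eq:jointftildeXY}, \eqref{eq:ftildeX}, \eqref{eq:ftildeY} into \eqref{equation:29}, simplify the ratio inside the logarithm (cancelling $f_X f_Y$ and collapsing the $C(p,q)$ factors), and change variables $u=F_X(x)$, $v=F_Y(y)$. The only difference is that you explicitly split off the $\log C(p,q)$ term before substituting, whereas the paper carries it inside the integral and extracts it afterwards; this is immaterial.
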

\begin{proof}
Due to  \eqref{eq:ftildeX}, \eqref{eq:ftildeY} and \eqref{eq:jointftildeXY}, it follows that
the mutual information of  \eqref{eq:defXYcond} is
\begin{eqnarray*}
 \tilde M_{X,Y}(\xi_p,\xi_q) \!\!\! &=& \!\!\!  \int_0^{F_X^{-1}(p)}{\rm d}x\int_0^{F_Y^{-1}(q)}
 \frac{c(F_X(x), F_Y(y)) f_X(x)f_Y(y)}{C(p,q)}\,
 \\
& \times & \!\!\!  \log\frac{c(F_X(x), F_Y(y))\, C(p, q)}
 {\int_0^q c(F_X(x), v)\,{\rm d}v\,   \int_0^p c(u, F_Y(y))\,{\rm d}u}\,{\rm d}y,
 \qquad p,q\in (0,1).
\end{eqnarray*}
Finally, setting $u=F_X(x)$ and $v=F_Y(y)$, we obtain \eqref{eq:MpqXY}.
\end{proof}
\begin{example}\rm
Let $(X, Y )$ have copula
$$
 C(u,v)=\frac{u\,v}{u+v-u\,v}, \qquad u,v\in (0,1),
$$
i.e.\ a special case of a Clayton copula. From Proposition \ref{prop:copulapq}, it follows that the
mutual information of   \eqref{eq:defXYcond} is
$$
  \tilde M_{X,Y}(\xi_p,\xi_q)=-\frac{1}{2}+\log 2=0.1931,
  \qquad p,q\in(0,1).
$$
\end{example}
\par
Let us now consider the  joint survival function $\overline{F}(x,y)$ and the corresponding 
marginal survival functions $\overline{F}_X(x)=\mathbb P(X>x)$ and $\overline{F}_Y(y)=\mathbb P(Y>y)$. 
Similarly as in \eqref{eq:relFxyC}, these functions are related by
\begin{equation}
 \overline F(x, y) =\tilde C( \overline  F_X(x), \overline F_Y(y)), \qquad x,y\in\mathbb{R},
 \label{eq:FugtildeC}
\end{equation}
where $\tilde C(u,v)=1-u-v-C(u,v)$, $u,v\in(0,1)$, is the survival copula function.
The survival copula density, given by
$$
 \tilde c(u,v)=\frac{\partial^2}{\partial u\partial v } \tilde C(u, v)
 =\frac{\partial^2}{\partial u\partial v} \overline F(\overline F^{-1}_X(u), \overline F^{-1}_Y(v)), 
 \qquad u,v\in(0,1),
$$
allows us to express the joint density of $(X,Y)$ as
\begin{equation}
 f(x,y)= f_X(x) f_Y(y)\,\tilde c(\overline F_X(x), \overline F_Y(y)) ,
 \qquad x,y\in\mathbb{R}.
 \label{eq:relfxytilde}
\end{equation}
We recall that the copula density and the survival copula density are related by the following identity:
$c(u,v)=\tilde c(1-u,1-v)$, $u,v\in(0,1)$.
\par
In order to consider the residual mutual information we make use of \eqref{eq:FugtildeC} 
and \eqref{eq:relfxytilde} in  \eqref{equation:7}, and perform the substitution $v=F_Y(y)$ in 
the integral. Moreover, by setting $s$ and $t$ as in \eqref{eq:stpq}, the density of the marginal
residual lifetime $[X-F_X^{-1}(p)\,|\,X> F_X^{-1}(p), Y> F_Y^{-1}(q)]$ can be expressed as
\begin{equation}
 f_X(x;\xi_p,\xi_q) 
 =  \frac{f_X(x+F_X^{-1}(p))}{\tilde C(1-p, 1-q)}\,\int_q^{1} \tilde c(\overline F_X(x+F_X^{-1}(p)), 1-v) \,{\rm d}v 
 \qquad \hbox{for $x\geq 0$}. 
 \label{eq_fxstnew}
\end{equation}
Similarly, from \eqref{equation:8}, it follows that the density of
$[Y-F_Y^{-1}(q)\,|\,X> F_X^{-1}(p), Y> F_Y^{-1}(q)]$ is given by
\begin{equation}
 f_Y(y;\xi_p,\xi_q)
 = \frac{f_Y(y+F_Y^{-1}(q))}{\tilde C(1-p, 1-q)}\,\int_p^{1} \tilde c(1-u, \overline F_Y(y+F_Y^{-1}(q)))
 \,{\rm d}u \qquad \hbox{for $y\geq0$}. 
 \label{eq_fystnew}
\end{equation}
Furthermore,  the density of the joint residual lifetimes
\begin{equation}
 [(X-F_X^{-1}(p), Y-F_Y^{-1}(q))\,|\,X> F_X^{-1}(p), Y> F_Y^{-1}(q)],  \qquad p,q\in(0,1),
 \label{eq:defjreslif}
\end{equation}
is
\begin{eqnarray}
 f_{X,Y}(x,y;\xi_p,\xi_q) \!\!\! &=& \!\!\! f_X(x+F_X^{-1}(p)) f_Y(y+F_Y^{-1}(q))
 \nonumber \\
 &\times & \!\!\! \frac{\tilde c(\overline F_X(x+F_X^{-1}(p)), \overline F_Y(y+F_Y^{-1}(q)))}{\tilde C(1-p,1-q)}
 \qquad \hbox{for $x\geq0$ and $y\geq0$.}
 \label{eq_fxystnew}
\end{eqnarray}
In conclusion, we obtain the dynamic mutual information for residual lifetimes
in terms of survival copula.
\begin{proposition}
The mutual information of the bivariate residual lifetimes \eqref{eq:defjreslif} for all $p,q\in(0,1)$ is given by
\begin{eqnarray}
 M_{X,Y}(\xi_p,\xi_q) \!\!\! &=& \!\!\! \log[\tilde C(1-p,1-q)]
 + \frac{1}{\tilde C(1-p,1-q)}
 \nonumber \\
 &\times &  \!\!\! \int_0^{1-p} {\rm d}z \int_0^{1-q} \tilde c(z, w) \log\frac{\tilde c(z, w)}
 {\int_0^{1-q} \tilde c( z, v)\,{\rm d}v\,
 \int_0^{1-p} \tilde c(u, w)\,{\rm d}u}\,\,{\rm d}w.
 \label{eq:Mpqfinale}
\end{eqnarray}
\end{proposition}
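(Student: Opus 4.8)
The plan is to substitute the three residual densities \eqref{eq_fxstnew}, \eqref{eq_fystnew} and \eqref{eq_fxystnew} directly into the definition \eqref{equation:10} of $M_{X,Y}(s,t)$ evaluated at $s=\xi_p$ and $t=\xi_q$, in exactly the same manner as in the proof of Proposition \ref{prop:copulapq}, and then to reduce everything to integrals of the survival copula density $\tilde c$ by a suitable change of variables. First I would form the ratio $f_{X,Y}/(f_X\,f_Y)$ inside the logarithm: the common marginal factors $f_X(x+F_X^{-1}(p))$ and $f_Y(y+F_Y^{-1}(q))$ cancel, and two of the three factors $\tilde C(1-p,1-q)^{-1}$ combine, so that the argument of the logarithm collapses to
$$
\frac{\tilde c(\overline F_X(x+F_X^{-1}(p)),\,\overline F_Y(y+F_Y^{-1}(q)))\,\tilde C(1-p,1-q)}
{\int_q^{1}\tilde c(\overline F_X(x+F_X^{-1}(p)),1-v)\,{\rm d}v\,\int_p^{1}\tilde c(1-u,\overline F_Y(y+F_Y^{-1}(q)))\,{\rm d}u}.
$$

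Next I would introduce the change of variables $z=\overline F_X(x+F_X^{-1}(p))$ and $w=\overline F_Y(y+F_Y^{-1}(q))$. Since $\overline F_X$ and $\overline F_Y$ are strictly decreasing, the Jacobian yields ${\rm d}z=-f_X(x+F_X^{-1}(p))\,{\rm d}x$ and ${\rm d}w=-f_Y(y+F_Y^{-1}(q))\,{\rm d}y$, and the two sign reversals are absorbed by interchanging the endpoints of integration. As $x$ ranges over $[0,+\infty)$ the new variable $z$ ranges over $(0,1-p]$, because $\overline F_X(F_X^{-1}(p))=1-p$, and likewise $w$ ranges over $(0,1-q]$; hence the outer double integral turns into $\int_0^{1-p}{\rm d}z\int_0^{1-q}(\cdots)\,{\rm d}w$, while the remaining factor $f_X(x+F_X^{-1}(p))\,f_Y(y+F_Y^{-1}(q))$ is entirely consumed by the Jacobian. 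In the two inner integrals appearing in the denominator above I would then carry out the reflections $v\mapsto 1-v$ and $u\mapsto 1-u$, which convert $\int_q^{1}\tilde c(z,1-v)\,{\rm d}v$ into $\int_0^{1-q}\tilde c(z,v)\,{\rm d}v$ and $\int_p^{1}\tilde c(1-u,w)\,{\rm d}u$ into $\int_0^{1-p}\tilde c(u,w)\,{\rm d}u$, precisely the normalizing integrals displayed in \eqref{eq:Mpqfinale}.

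Finally I would split the logarithm into $\log\tilde C(1-p,1-q)$ plus the copula ratio. Pulling the constant $\log\tilde C(1-p,1-q)$ outside and invoking the normalization $\int_0^{1-p}{\rm d}z\int_0^{1-q}\tilde c(z,w)\,{\rm d}w=\tilde C(1-p,1-q)$, which is nothing but the statement that $f_{X,Y}(\cdot,\cdot;\xi_p,\xi_q)$ integrates to one re-expressed through the same change of variables, the first contribution reduces to $\log\tilde C(1-p,1-q)$ and the second is exactly the stated double integral divided by $\tilde C(1-p,1-q)$. This produces \eqref{eq:Mpqfinale}.

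I do not anticipate a genuine obstacle, since the whole argument is the survival-copula mirror of Proposition \ref{prop:copulapq}. The only point that demands care is the bookkeeping of orientation in the change of variables $z=\overline F_X(\cdot)$, $w=\overline F_Y(\cdot)$ --- the survival functions are decreasing, so one must verify that the two Jacobian sign changes cancel against the reversal of the limits of integration --- together with the reflections $u\mapsto 1-u$ and $v\mapsto 1-v$ needed to match the bounds $[p,1]$ and $[q,1]$ coming from \eqref{eq_fxstnew} and \eqref{eq_fystnew} with the bounds $[0,1-p]$ and $[0,1-q]$ in the target formula. Once these substitutions are recorded, the identity follows by inspection.
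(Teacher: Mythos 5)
Your proposal is correct and follows essentially the same route as the paper's own proof: substitute the densities \eqref{eq_fxstnew}, \eqref{eq_fystnew}, \eqref{eq_fxystnew} into \eqref{equation:10} and apply the change of variables $z=\overline F_X(x+F_X^{-1}(p))$, $w=\overline F_Y(y+F_Y^{-1}(q))$. The paper states this more tersely, while you additionally spell out the sign bookkeeping, the reflections $u\mapsto 1-u$, $v\mapsto 1-v$, and the normalization $\int_0^{1-p}\int_0^{1-q}\tilde c(z,w)\,{\rm d}w\,{\rm d}z=\tilde C(1-p,1-q)$, all of which are correct.
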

\begin{proof}
Making use of Eqs.\ \eqref{eq_fxstnew}, \eqref{eq_fystnew} and \eqref{eq_fxystnew},
for $p,q\in(0,1)$, we can write
\begin{eqnarray*}
 M_{X,Y}(\xi_p,\xi_q)  \!\!\! &=&  \!\!\! \int_0^{+\infty} {\rm d}x \int_0^{+\infty}
 \frac{  \tilde c(\overline F_X(x+F_X^{-1}(p)), \overline F_Y(y+F_Y^{-1}(q)))}
 {\tilde C(1-p,1-q)}\,
 \\
 &\times &  \!\!\! f_X(x+F_X^{-1}(p)) f_Y(y+F_Y^{-1}(q)) \\
 & \times &  \!\!\! \log\frac{\tilde C(1-p,1-q) \, \tilde c(\overline F_X(x+F_X^{-1}(p)), \overline F_Y(y+F_Y^{-1}(q)))}
 {\int_q^{1} \tilde c(\overline F_X(x+F_X^{-1}(p)), 1-v)\,{\rm d}v\,
 \int_p^{1} \tilde c(1-u, \overline F_Y(y+F_Y^{-1}(q)))\,{\rm d}u}\,{\rm d}y.
\end{eqnarray*}
Hence, setting $z=\overline F_X(x+F_X^{-1}(p))$ and $w= \overline F_Y(y+F_Y^{-1}(q))$ we obtain
\eqref{eq:Mpqfinale}.
\end{proof}
%
\section*{Acknowledgements}
This work is partially supported by INdAM-GNCS, by FARO (Universit\`a di Napoli Federico II)
and by Regione Campania (Legge 5).

%
\end{document}